\def\figurename{Figure} % Replace the colon that normally appears after the Figure number by a period.
\renewcommand{\fnum@figure}[1]{\figurename~\thefigure.}
\def\tablename{Table} % Replace the colon that normally appears after the Figure number by a period.
\renewcommand{\fnum@table}[1]{\tablename~\thetable.}
\newtheorem{theorem}{Theorem}[section]
\newtheorem{lemma}[theorem]{Lemma}
\newtheorem{proposition}[theorem]{Proposition}
\theoremstyle{definition}
\newtheorem{definition}[theorem]{Definition}
\newtheorem{example}[theorem]{Example}
\theoremstyle{remark}
\newtheorem{remark}[theorem]{Remark}
\numberwithin{equation}{section}
\def\P{\mathbb P}
\def\R{\mathbb R}
\def\E{\mathbb E}
\def\E{\mathbb E}
\def\N{\mathbb N}
\def\cal{\mathcal}
\begin{document}
%\vskip 0.4in
\title{\bfseries\scshape{Reflected generalized backward doubly SDEs driven by L\'{e}vy processes and Applications}}
\author{\bfseries\scshape Auguste Aman\thanks{augusteaman5@yahoo.fr,\ Corresponding author.}\\
U.F.R.M.I, Universit\'{e} de Cocody, \\582 Abidjan 22, C\^{o}te d'Ivoire}

\date{}
\maketitle \thispagestyle{empty} \setcounter{page}{1}

\begin{abstract}
In this paper, a class of reflected generalized backward doubly
stochastic differential equations (reflected GBDSDEs in short) driven by Teugels martingales
associated with L\'{e}vy process and the integral with respect to an
adapted continuous increasing process is investigated. We obtain the existence
and uniqueness of solutions to these equations. A probabilistic interpretation
for solutions to a class of reflected stochastic partial differential integral
equations (PDIEs in short) with a nonlinear Neumann boundary condition
is given.
\end{abstract}

\noindent {\bf AMS Subject Classification:} 60H15; 60H20

\vspace{.08in} \noindent \textbf{Keywords}: Reflected backward doubly SDEs, stochastic
partial differential integral equation; L\'{e}vy process; Teugels martingale;
Neumann boundary condition.

\section{Introduction}
Backward stochastic differential equations (BSDEs, in short) have been first
introduced by Pardoux and Peng \cite{PP1} in order to give a probabilistic interpretation
(Feynman-Kac formula) for the solutions of semilinear parabolic
PDEs, one can see Peng \cite{P}, Pardoux and Peng \cite{PP2}. Recently, a new class
of BSDEs, named backward doubly stochastic differential equations (BDSDEs
in short) has been introduced by Pardoux and Peng \cite{PP3} in order to
give a probabilistic representation for a class of quasilinear stochastic partial
differential equations (SPDEs in short). Following it, Bally and Matoussi
\cite{BM} gave the probabilistic representation of the weak solutions to parabolic
semilinear SPDEs in Sobolev spaces by means of BDSDEs. Furthermore,
Pardoux and Zhang \cite{PZ} gave a probabilistic formula for the viscosity solution
of a system of PDEs with a nonlinear Neumann boundary condition by introducing a generalized BSDEs (GBSDEs, in short) which involved an
integral with respect to an adapted continuous increasing process. Its extension
to an obstacle problem for PDEs with a nonlinear Neumann boundary
condition was given in Ren and Xia \cite{RX} by reflected GBSDEs. Motivated
by the above works, especially by \cite{PP3} and \cite{PZ}, Boufoussi et al. \cite{Boual1} recommended
a class of generalized BDSDEs (GBDSDEs in short) and gave the
probabilistic representation for stochastic viscosity solutions of semi-linear
SPDEs with a Neumann boundary condition.
The main tool in the theory of BSDEs is the martingale representation
theorem, which is well known for martingale which adapted to the filtration
of the Brownian motion or that of Poisson point process (Pardoux and Peng
\cite{PP1}, Tang and Li \cite{TL}) or that of a Poisson random measure ( see Ouknine \cite{O}).
Recently, Nualart and Schoutens \cite{NS1} gave a martingale representation theorem
associated to L\'{e}vy process. Furthermore, they showed the existence
and uniqueness of solutions to BSDEs driven by Teugels martingales associated
with L\'{e}vy process with moments of all orders in \cite{NS2}. The results were
important from a pure mathematical point of view as well as in the world of
finance. It could be used for the purpose of option pricing in a L\'{e}vy market
and related PDEs which provided an analogue of the famous Black-Scholes
formula. Further, Hu and Yong considered respectively BDSDEs and generalized BDSDE driven by L\'{e}vy processes
and its applications in \cite{HY1} and \cite{HY2}.

Motivated by the above works, especially by \cite{HY2} the purpose of the present paper is to
consider reflected GBDSDEs driven by L\'{e}vy processes of the kind considered in Nualart
and Schoutens \cite{NS1}. Our aim is to give a probabilistic interpretation
for the solutions to a class of reflected stochastic PDIEs with a nonlinear Neumann
boundary condition.

The paper is organized as follows. In Section 2, we introduce some
preliminaries and notations. Section 3 is devoted to GBDSDEs driven by L\'{e}vy processes and the comparison theorem related to it. In
Section 4, we give existence and uniqueness result for the reflected GBDSDE. Finally Section 5 point out a probabilistic interpretation of solutions to a class of
reflected stochastic PDIEs with a nonlinear Neumann boundary condition.

\section{Preliminaries and Notations}
\setcounter{theorem}{0} \setcounter{equation}{0}
The scalar product of the space $\R^{d} (d\geq 2)$ will be denoted
by $<.>$ and the associated Euclidian norm  by $\|.\|$.

In what follows let us fix a positive real number
$T>0$. Let $(\Omega, \mathcal{F},\P,\mathcal{F}_{t},B_{t}, L_{t}: t\in [0, T])$ be a complete Wiener-L\'{e}vy space in $\R \times\R\backslash\{0\}$, with Levy measure $\nu$, i.e. $(\Omega, \mathcal{F},\P)$ is a complete probability space, $\{\mathcal{F}_{t}: t\in[0, T]\}$ is a right-continuous increasing family of complete sub $\sigma$-algebras of $\mathcal{F}$, $\{B_t: t\in [0, T]\}$ is a standard Wiener process in $\R$ with respect to $\{\mathcal{F}_{t}: t\in[0, T]\}$ and $\{L_{t}: t\in [0, T]\}$ is a $\R$-valued L\'{e}vy process independent of $\{B_{t} : t \in[0, T]\}$, which has only $m$ jumps size and no continuous part and corresponding to a standard L\'{e}vy measure $\nu$ satisfying the following conditions:
$
\begin{array}{l}
\int_{\R}(1 \wedge y)\nu(dy) < \infty,
\end{array}
$

Let $\mathcal{N}$ denote the totality of $\P$-null sets of $\mathcal{F}$. For each $t\in[0, T]$, we define that
\begin{eqnarray*}
\mathcal{F}_{t}= \mathcal{F}_{t}^{L}\vee \mathcal{F}_{t,T}^{B}
\end{eqnarray*}
where for any process $\{\eta_t\},\; \mathcal{F}_{s,t}^{\eta}=\sigma(\eta_{r}-\eta_s, s\leq r \leq t)\vee
\mathcal{N},\; \mathcal{F}_{t}^{\eta}=\mathcal{F}_{0,t}^{\eta}$.

Let us remark that the collection ${\bf F}= \{\mathcal{F}_{t},\ t\in
[0,T]\}$ is neither increasing nor decreasing and it does not
constitute a filtration.

We denote by $(H^{(i)})_{i\geq 1}$ the Teugels Martingale associated with the L\'{e}vy
process $\{L_t : t\in[0, T]\}$. More precisely
\begin{eqnarray*}
H^{(i)}=c_{i,i}Y^{(i)}+c_{i,i-1}Y^{(i-1)}+\cdot\cdot\cdot+c_{i,1}Y^{(1)}
\end{eqnarray*}
where $Y^{(i)}_{t}=L_{t}^{i}-\E(L^{i}_t)=L_{t}^{i}-t\E(L^{1}_t)$ for all $i\geq 1$ and $L^{i}_t$ are power-jump processes. That is $L^{1}_t=L_t$ and $L^{i}_t=\sum_{0<s<t}(\Delta L_s)^i$ for all $i\geq 2$, where $X_{t^-} = \lim_{s\nearrow t} X_s$ and $\Delta X_t = X_t - X_{t^-}$. It was shown in Nualart and Schoutens \cite{NS1} that the coefficients $c_{i,k}$ correspond to the orthonormalization of the polynomials $1, x, x^2,...$ with respect to the measure $\mu(dx)=x^{2}d\nu(x)+\sigma^{2}\delta_{0}(dx)$:
\begin{eqnarray*}
q_{i-1}(x)=c_{i,i}x^{i-1}+c_{i,i-1}x^{i-2}+\cdot\cdot\cdot+c_{i,1}.
\end{eqnarray*}
We set
\begin{eqnarray*}
p_i(x)=xq_{i-1}(x)=c_{i,i}x^{i}+c_{i,i-1}x^{i-1}+\cdot\cdot\cdot+c_{i,1}x^{1}.
\end{eqnarray*}

The martingale $(H^{(i)})_{i\geq 1}$ can be chosen to be pairwise strongly orthonormal
martingale.
\begin{remark}
\begin{enumerate}
\item If $\mu$ only has mass at $1$, we are in the Poisson case; here
$H^{(i)}_t= 0, i = 2,\cdot\cdot\cdot$. This case is degenerate in this L\'{e}vy framework
\item Generally, if the L\'{e}vy process $L$ has only $m$ different jump sizes, then
\begin{description}
\item $(i)\, H^{(k)} = 0, \forall\; k\geq m+1$, if $L$ has no continuous part;
\item $(ii)\, H^{(k)} = 0, \forall\; k\geq m+2$, if $L$ has continuous part.
\end{description}
\end{enumerate}
\end{remark}

In the sequel, let\
$\{A_t,\ 0\leq t\leq T\}$\ be a continuous, increasing and
${\bf F}$-adapted real valued with bounded
variation on $[0,T]$ such that\ $A_0=0$.

For any $d\geq 1$, we consider the following spaces of processes:
\begin{enumerate}
\item $\mathcal{M}^{2}(\R^{d})$ denote the space of real valued, square integrable and $\mathcal{F}_{t}$-predictable processes $\varphi=\{\varphi_{t};
t\in[0,T]\}$ such that
\begin{description}
\item $\|\varphi\|^{2}_{{\mathcal{M}}^{2}}=\E\int ^{T}_{0}\|\varphi_{t}\|^{2}dt<\infty$.
\end{description}
\item  $\mathcal{S}^{2}(\R)$ is the subspace of  $\mathcal{M}^{2}(\R)$ formed by the $\mathcal{F}_{t}$-adapted processes $\varphi=\{\varphi_{t};
t\in[0,T]\}$ right continuous with left limit (rcll) such that
\begin{description}
\item $\displaystyle{\|\varphi\|^{2}_{\mathcal{S}^{2}}=\E\left(\sup_{0\leq t\leq T}|\varphi_{t}|
^{2}+\int ^{T}_{0}|\varphi_{t}|^{2}dA_t\right)<\infty}$.
\end{description}
\item  $\mathcal{A}^{2}(\R)$ is the set of $\mathcal{F}_{t}$-measurable, continuous, real-valued,
increasing process $\varphi=\{\varphi_{t}; t\in[0,T]\}$ such that $K_0 = 0,\; \E|K_T|^2 < \infty$
\end{enumerate}
Finally we denote by $\mathcal{E}^{2,m}=\mathcal{S}^{2}(\R)\times{\mathcal{M}}^{2}(\R^{m})\times\mathcal{A}^{2}(\R)$ endowed with the norm
\begin{eqnarray*}
\|(Y,Z,K)\|^{2}_{\mathcal{E}}=\E\left(\sup_{0\leq t\leq T}|Y_{t}|
^{2}+\int ^{T}_{0}|Y_{t}|^{2}dA_t+\int ^{T}_{0}\|Z_{t}\|^{2}dt+|K_{T}|^{2}\right).
\end{eqnarray*}
Then, the couple $(\mathcal{E}^{2,m}, \|.\|_{\mathcal{E}^{2,m}})$ is a Banach space.

To end this section, let us give following needed assumptions
\begin{itemize}
\item[$(\textbf{H1})$] $\xi$ is a square integrable random variable which is $\mathcal{F}_{T}$-measurable such  that for all $\mu>0$
$$ \mathbb{E}\left(e^{\mu A_T}|\xi|^2\right) < \infty. $$
\item[$(\textbf{H2})$]
$f:\Omega\times [0,T]\times\R\times \R^{m}\rightarrow \R$,\
$g:\Omega\times [0,T]\times\R\rightarrow \R,$\
and $\phi:\Omega\times [0,T]\times\R \rightarrow \R,$ are three
functions such that:
\begin{itemize}
  \item[$(a)$] There exist $\mathcal{F}_t$-adapted processes $\{f_t,\,
\phi_t,\,g_t:\,0\leq t\leq T\}$ with values in $[1,+\infty)$ and
with the property that for any $(t,y,z)\in
[0,T]\times\R\times\R^{d}$, and $\mu>0$, the following hypotheses
are satisfied for some strictly positive finite constant $K$:
\begin{eqnarray*}
\left\{
\begin{array}{l}
f(t,y,z),\, \phi(t,y),\,\mbox{and}\, g(t,y,z)\, \mbox{are}\, \mathcal{F}_t\mbox{-measurable processes},\\\\
|f(t,y,z)|\leq f_t+K(|y|+\|z\|),\\\\
|\phi(t,y)|\leq \phi_t+K|y|,\\\\
|g(t,y)|\leq g_t+K|y|,\\\\
\displaystyle \E\left(\int^{T}_{0} e^{\mu
A_t}f_t^{2}dt+\int^{T}_{0}e^{\mu A_t}g_t^{2}dt+\int^{T}_{0}e^{\mu
A_t}\phi_t^{2}dA_t\right)<\infty.
\end{array}\right.
\end{eqnarray*}
\item[$(b)$] There exist constants $c>0, \beta<0$ and $0<\alpha<1$ such that for any $(y_1,z_1),\,(y_2,z_2)\in\R\times\R^{m}$,
\begin{eqnarray*}
\left\{
\begin{array}{l}
(i)\, |f(t,y_1,z_1)-f(t,y_2,z_2)|^{2}\leq c(|y_1-y_2|^{2}+\|z_1-z_2\|^{2}),\\\\
(ii)\, |g(t,y_1)-g(t,y_2)|^{2}\leq c|y_1-y_2|^{2},\\\\
(iii)\; \langle y_1-y_2,\phi(t,y_1)-\phi(t,y_2)\rangle\leq
\beta|y_1-y_2|^{2}.
\end{array}\right.
\end{eqnarray*}
\end{itemize}
\item[($\textbf{H3}$)] The obstacle $\left\{  S_{t},0\leq t\leq T\right\}  $,
is a $\mathcal{F}_{t}$-progressively measurable
real-valued process satisfying $$E\left(  \sup_{0\leq t\leq T}\left|
S^+_{t}\right| ^{2}\right)  <\infty.$$ We shall always assume that
$S_{T}\leq\xi\ a.s.$
\end{itemize}

\section{Generalized backward doubly stochastic differential equations driven by L\'{e}vy processes}
In this section, we present existence and uniqueness results for GBDSDEs driven by L\'{e}vy processes and we prove a comparison theorem
which is an important tool in the proofs for results of Sections 4. The existence and uniqueness result is a direct consequence of Theorem 3.2 in \cite{HY1}.
\begin{proposition}\label{HY2}
Given standard parameter $(\xi, f,\phi, g)$, there exists $(Y,Z)\in{\mathcal{S}}^{2}(\R)\times{\mathcal{M}}^{2}(\R^{m})$
to the following GBDSDEs driven by the L\'{e}vy processes
\begin{eqnarray}
Y_{t}&=&\xi+\int_{t}^{T}f(s,Y_{s^-},Z_{s})ds+\int_{t
}^{T}\phi(s,Y_{s^-})dA_s+\int_{t}^{T}g(s,Y_{s^-})\,dB_{s}\nonumber\\
&&-\sum_{i=1}^{m}\int_{t}^{T}Z^{(i)}_{s}dH^{(i)}_{s},\,\ 0\leq t\leq
T.\label{a0}
\end{eqnarray}
Here the integral with respect to $\{B_t\}$ is the classical backward It\^{o} integral
(see Kunita \cite{K}) and the integral with respect to $\{H^{(i)}_t\}$ is a standard forward
It\^{o}-type semimartingale integral.
\end{proposition}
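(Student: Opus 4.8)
The plan is to reduce Proposition \ref{HY2} to the existence and uniqueness theorem for GBDSDEs driven by Lévy processes established in Hu and Yong \cite{HY1} (their Theorem 3.2). The essential observation is that the generalized term $\int_t^T \phi(s,Y_{s^-})\,dA_s$ involving the continuous increasing process $A$ can be absorbed, together with the Lebesgue integral, into a single driver acting against the increasing process $dt + dA_t$, or alternatively that the hypotheses (\textbf{H1})--(\textbf{H2}) are exactly the weighted-norm versions of the standard parameter conditions in \cite{HY1}. Concretely, I would first record that under (\textbf{H2})(b)(iii) the monotonicity constant $\beta<0$ provides the dissipativity needed to control the $dA_t$-integral, while (\textbf{H2})(b)(i)--(ii) give the Lipschitz control of $f$ in $(y,z)$ and of $g$ in $y$; the growth bounds and the integrability condition $\E\big(\int_0^T e^{\mu A_t}f_t^2\,dt + \int_0^T e^{\mu A_t} g_t^2\,dt + \int_0^T e^{\mu A_t}\phi_t^2\,dA_t\big)<\infty$, combined with $\E(e^{\mu A_T}|\xi|^2)<\infty$ from (\textbf{H1}), guarantee that the data lie in the correct space after the exponential change of weight.

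Second, I would introduce the weighted norm on $\mathcal{E}^{2,m}$ (or rather on $\mathcal{S}^2(\R)\times\mathcal{M}^2(\R^m)$): for a suitable $\mu>0$ set
\begin{eqnarray*}
\|(Y,Z)\|_{\mu}^{2}=\E\left(\sup_{0\leq t\leq T}e^{\mu A_t}|Y_t|^{2}+\int_0^T e^{\mu A_t}|Y_t|^{2}\,dA_t+\int_0^T e^{\mu A_t}\|Z_t\|^{2}\,dt\right),
\end{eqnarray*}
which is equivalent to the original one since $A$ has bounded variation on $[0,T]$. Then I would run the standard Picard iteration: starting from $(Y^0,Z^0)=(0,0)$, define $(Y^{n+1},Z^{n+1})$ as the solution of the GBDSDE in which $f,\phi,g$ are frozen at $(Y^n,Z^n)$; each such step is a GBDSDE with data not depending on the unknown in a feedback way, so it is solved by the martingale representation theorem for Teugels martingales of Nualart and Schoutens \cite{NS1} exactly as in \cite{HY1}. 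Applying the backward Itô formula of Kunita \cite{K} to $e^{\mu A_t}|Y_t^{n+1}-Y_t^{n}|^2$, using (\textbf{H2})(b) and choosing $\mu$ large enough relative to the Lipschitz constants $c$, $K$ and the monotonicity constant $\beta$, I would obtain a contraction in $\|\cdot\|_{\mu}$ on the Banach space, whence existence and uniqueness of a fixed point $(Y,Z)\in\mathcal{S}^2(\R)\times\mathcal{M}^2(\R^m)$ solving \eqref{a0}.

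Third, I would verify that the two stochastic integrals are of the stated types: $\int_t^T g(s,Y_{s^-})\,dB_s$ is a backward Itô integral against the Wiener process (so that, consistently with the filtration $\mathcal{F}_t=\mathcal{F}_t^L\vee\mathcal{F}_{t,T}^B$, the integrand is measurable with respect to the backward Brownian filtration), while $\sum_{i=1}^m\int_t^T Z_s^{(i)}\,dH_s^{(i)}$ is an ordinary forward Itô integral against the pairwise strongly orthonormal Teugels martingales; the orthonormality $\langle H^{(i)},H^{(j)}\rangle_t=\delta_{ij}t$ is what produces the term $\int_0^T\|Z_t\|^2\,dt$ in the energy estimate. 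The main obstacle, and the only place requiring genuine care, is the choice of the weight $\mu$ and the bookkeeping of constants in the backward-Itô energy identity so that the cross terms coming from $f$, $g$ and $\phi$ are dominated: the $dA_t$-integral must be handled via the monotonicity inequality (\textbf{H2})(b)(iii) rather than a Lipschitz bound, since $\phi$ is only assumed Lipschitz through the dissipativity condition, and one must ensure the exponential weights on the data in (\textbf{H1})--(\textbf{H2}) are strong enough to close the estimate. Once this is arranged the result follows, as claimed, as a direct consequence of Theorem 3.2 in \cite{HY1}.
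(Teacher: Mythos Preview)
Your proposal is correct and matches the paper's approach exactly: the paper gives no independent proof but simply states that the result is a direct consequence of Theorem~3.2 in \cite{HY1}, and your sketch of the Picard/contraction argument under the weighted norm $e^{\mu A_t}$ is precisely the machinery behind that theorem. In fact you have written out more detail than the paper itself, which records only the citation.
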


The comparison theorem is one of the principal tools in the theories of the BSDEs. But it does not hold in general for
solutions of BSDEs with jumps (see the counter-example in Barles et al. \cite{Bal}). In the following we prove, with the
additional property of the jumps size, the comparison theorem for solution of GBDSDEs driven by L\'{e}vy processes. Let note that in the standard BSDE case i.e $g=\phi=0$, comparison theorem has already been established by Qing Zhou \cite{QZ} with this property of jumps size.
\begin{theorem} \label{Thm:comp}
Assume that $L$ has only $n$ different jump sizes and has no continuous part. Let $(\xi^1,f^1,\phi,g)$ and $(\xi^2,f^2,\phi,g)$ be two standard parameters of BSDE $(\ref{a0})$ and let $(Y^1,Z^1)$ and $(Y^2,Z^2)$ be the associated square-integrable solutions. Suppose that
\begin{enumerate}
\item $\xi^1\geq\xi^2,\; \P \;$ a.s.,
\item $f^1(t,y,z)\geq f^2(t,y,z),\; \P\;$ a.s. for all $y\in\R,\; z\in\R^m$,
\item $\displaystyle{\beta^{i}_t=\frac{f(t,Y^2_{t^-},\tilde{Z}_{t}^{(i-1)})-f^2(t,Y^2_{t^-},\tilde{Z}_{t}^{(i)})}{Z^{1(i)}_t-Z^{2(i)}_t}{\bf 1}_{\{Z^{1(i)}_t-Z^{2(i)}_t\neq 0\}}}$
\end{enumerate}
where
\begin{eqnarray*}
\tilde{Z}^{i}&=&\left(Z^{2(1)},Z^{2(2)},\cdot\cdot\cdot,Z^{2(i)},Z^{1(i+1)},\cdot\cdot\cdot,Z^{1(n)}\right)\\
\tilde{Z}^{i-1}&=&\left(Z^{2(1)},Z^{2(2)},\cdot\cdot\cdot,Z^{2(i-1)},Z^{1(i)}, Z^{1(i+1)},\cdot\cdot\cdot,Z^{1(n)}\right),
\end{eqnarray*}
satisfying that $\displaystyle{\sum_{i=1}^{m}\beta^{i}_t\Delta H^{i}_t>-1,\; dt\otimes d\P}$ a.s.
Then we have that almost surely for any time $t,\; Y^1_t\geq Y^2_t$ and that if $\P(\xi^1>\xi^2)>0$ then $\P(Y^1_t>Y^2_t)>0$.
\end{theorem}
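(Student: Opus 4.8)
The strategy is the classical linearization-plus-sign argument for comparison theorems, carried out in the backward doubly stochastic setting with Teugels martingales. Set $\bar Y = Y^1 - Y^2$, $\bar Z^{(i)} = Z^{1(i)} - Z^{2(i)}$, $\bar\xi = \xi^1-\xi^2$, and $\bar f_s = f^1(s,Y^1_{s^-},Z^1_s) - f^2(s,Y^2_{s^-},Z^2_s)$. Subtracting the two copies of \eqref{a0} gives
\begin{eqnarray*}
\bar Y_t = \bar\xi + \int_t^T \bar f_s\,ds + \int_t^T\big(\phi(s,Y^1_{s^-})-\phi(s,Y^2_{s^-})\big)\,dA_s + \int_t^T\big(g(s,Y^1_{s^-})-g(s,Y^2_{s^-})\big)\,dB_s - \sum_{i=1}^m\int_t^T\bar Z^{(i)}_s\,dH^{(i)}_s.
\end{eqnarray*}
The first step is to linearize. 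Writing $\bar f_s = \big(f^1(s,Y^1_{s^-},Z^1_s)-f(s,Y^2_{s^-},Z^1_s)\big) + \big(f(s,Y^2_{s^-},Z^1_s)-f^2(s,Y^2_{s^-},Z^2_s)\big)$ (using $f=f^1$ or whichever convention the paper intends in hypothesis 3), I would introduce a bounded adapted process $a_s$ with $f^1(s,Y^1_{s^-},Z^1_s)-f^1(s,Y^2_{s^-},Z^1_s) = a_s\bar Y_{s^-}$ (existence and boundedness of $a_s$ follow from (H2)(b)(i)), and decompose the remaining $Z$-difference coordinate by coordinate along the telescoping chain $\tilde Z^0 = Z^1,\ \tilde Z^1,\ \dots,\ \tilde Z^n = Z^2$, so that $f(s,Y^2_{s^-},Z^1_s)-f^2(s,Y^2_{s^-},Z^2_s) = \sum_{i=1}^m \beta^i_s\,\bar Z^{(i)}_s$ with $\beta^i_s$ exactly the ratio in hypothesis 3 (again bounded by (H2)(b)(i)). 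Similarly set $b_s\bar Y_{s^-} = \phi(s,Y^1_{s^-})-\phi(s,Y^2_{s^-})$ with $b_s\le\beta<0$ by (H2)(b)(iii), and $c_s\bar Y_{s^-} = g(s,Y^1_{s^-})-g(s,Y^2_{s^-})$ with $c_s$ bounded by (H2)(b)(ii). This turns the equation for $\bar Y$ into a linear GBDSDE driven by the $H^{(i)}$ and $B$.

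Second, I would construct the adjoint exponential process $\Gamma_{t,s}$ solving the forward linear SDE $d\Gamma_{t,s} = \Gamma_{t,s^-}\big(a_s\,ds + b_s\,dA_s + c_s\,dB_s + \sum_{i=1}^m\beta^i_s\,dH^{(i)}_s\big)$, $\Gamma_{t,t}=1$. The condition $\sum_{i=1}^m\beta^i_s\Delta H^i_s > -1$ is precisely what guarantees, via the Doléans-Dade formula for processes with jumps, that $\Gamma_{t,s}$ stays strictly positive; this is the one place the "no continuous part, finitely many jump sizes" hypothesis and the jump-size control are indispensable (and it is the analogue of the Qing Zhou \cite{QZ} condition, the reason comparison can fail in general jump BSDEs as in \cite{Bal}). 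I would then apply the product/Itô formula to $\Gamma_{t,s}\bar Y_s$ on $[t,T]$; the drift, the $dA$, the $dB$ and the $dH^{(i)}$ terms cancel by construction, leaving
\begin{eqnarray*}
\bar Y_t = \E\Big[\Gamma_{t,T}\bar\xi + \int_t^T \Gamma_{t,s}\big(f^1(s,Y^2_{s^-},Z^2_s) - f^2(s,Y^2_{s^-},Z^2_s)\big)\,ds \,\Big|\, \mathcal F_t\Big],
\end{eqnarray*}
after taking conditional expectation (the stochastic integrals against $B$ and the $H^{(i)}$ being martingales under the relevant filtration, using the doubly-stochastic structure as in \cite{HY1}, \cite{PP3}). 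Since $\Gamma_{t,T}\ge 0$, $\bar\xi = \xi^1-\xi^2\ge0$ by hypothesis 1, and the integrand is $\ge0$ by hypothesis 2, both terms are nonnegative, hence $\bar Y_t\ge0$, i.e. $Y^1_t\ge Y^2_t$ a.s. for every $t$.

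Third, for the strict part: if $\P(\xi^1>\xi^2)>0$ then $\Gamma_{t,T}\bar\xi > 0$ on a set of positive probability (strict positivity of $\Gamma$ again), so its conditional expectation is strictly positive on a set of positive probability, forcing $\P(Y^1_t > Y^2_t)>0$. I expect the main obstacle to be the careful justification of the strict positivity and integrability of $\Gamma_{t,s}$ — writing the explicit Doléans-Dade exponential, checking the jump factors $\prod_s(1+\sum_i\beta^i_s\Delta H^i_s)$ are positive and that $\Gamma$ has enough integrability (together with $\bar Y$, $f^1(s,Y^2,Z^2)-f^2(s,Y^2,Z^2)$) to legitimate the product rule, the localization, and the passage to conditional expectations in this non-filtration, doubly-stochastic framework. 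The coordinatewise linearization of $f$ along $\tilde Z^{i-1}\to\tilde Z^i$ and bookkeeping of which argument is "$f$" versus "$f^2$" in hypothesis 3 is the other point requiring care, but it is routine once the telescoping is set up.
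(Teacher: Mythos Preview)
Your approach is essentially the same as the paper's: linearize the difference equation via bounded coefficients $a_s,b_s,c_s,\beta^i_s$, build the Dol\'eans--Dade exponential adjoint $\Gamma$, apply It\^o's product formula to $\Gamma\bar Y$, and take conditional expectation to obtain the representation $\Gamma_t\bar Y_t=\E[\Gamma_T\bar\xi+\int_t^T\Gamma_{s^-}(f^1-f^2)(s,Y^2_{s^-},Z^2_s)\,ds\mid\mathcal F_t]$. One technical point the paper makes explicit and you should too: because the $dB$-integral is a \emph{backward} It\^o integral, the quadratic-covariation term in the product formula carries the opposite sign, and the paper compensates by including an extra $-\int_0^t|c_s|^2\,ds$ in the driving semimartingale $X$ of $\Gamma$; without this correction the $dB$ and bracket terms will not cancel cleanly.
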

\begin{proof}
Denote
\begin{eqnarray*}
\hat{\xi} &=& \xi^{1}-\xi^{2},\;\;\;\; \hat{Y}_t = Y_t^{1}-Y_t^{2},\;\;\;\;\; \hat{V}_t = V^{1}_t-V^{2}_t,\;\;\;\;\;\; \hat{Z}_t = Z^{1}_t-Z^{2}_t\\
\hat{f}_t &=&f^1(t, Y^ 2_t , Z^2_t )-f^{2}(t, Y^{2}_t, Z^{2}_t),
\end{eqnarray*}
and
\begin{eqnarray*}
a_t &=& [f^1(t, Y^1_t,Z^1_t) - f^1(t, Y^2_t,Z^1_t)]/(Y^1_t-Y^2_t){\bf 1}_{\{Y^1_t\neq Y^2_t\}},\\
b_t &=& [\phi(t, Y^1_t)-\phi(t,Y^2_t,)]/(Y^1_t-Y^2_t){\bf 1}_{\{Y^1_t\neq Y^2_t\}},\\
c_t &=& [g(t, Y^1_t) - g(t, Y^2_t)]/(Y^1_t-Y^2_t){\bf 1}_{\{Y^1_t\neq Y^2_t\}}.
\end{eqnarray*}
Then
\begin{eqnarray*}
\hat{Y}_t &=& \hat{\xi}+\int_{t}^{T}[a_s\hat{Y}_{s^-}+\sum_{i=1}^{m}\beta^{i}_s\hat{Z}^{(i)}_s+\hat{f}_s]ds+\int_{t}^{T}b_s\hat{Y}_{s^-}dA_s+\int_{t}^{T}c_s\hat{Y}_{s^-}dB_s
-\sum^{m}_{i=1}\int_t^T\hat{Z}^{(i)}_sdH^{(i)}_s
\end{eqnarray*}
is a linear GBDSDE driven the L\'{e}vy processes.

Let $\displaystyle{\Gamma_t= 1 + \int_{0}^{t}\Gamma_{s^-}dX_s}$, where
\begin{eqnarray*}
 X_t=\int_{0}^{t}a_sds+\int_{0}^{t}b_sdA_s+\int_{0}^{t}c_sdB_s-\int_{0}^{t}|c_s|^{2}ds+\sum_{i=1}^{m}\int_{0}^{t}\beta^{i}_sdH^{(i)}_s.
 \end{eqnarray*}Then we have $\displaystyle{\Delta X_t =
\sum_{i=1}^{m}\beta^{i}_t\Delta H^{(i)}_t >-1}$. Note that $|a_t |\leq C,\; |b_t|\leq C,\; |c_t|\leq C,\;  |\beta^{i}_t|\leq C$, for all $0\leq t\leq T$, a.s., $i=1,\cdot\cdot\cdot,m$. Then by the Dol\'{e}ans-Dade exponential formula and the Gronwall inequality, we conclude that $\Gamma_t>0$ and $\sup_{0\leq t\leq T}\E[\Gamma^{2}_t]\leq C_1$. Thus, $\E[\int_{0}^{T}\Gamma^{2}_{s^-}ds]\leq C_1$, where $C_1$ is a positive constant. Then applying It\^{o}'s formula to
$\Gamma_s\hat{Y}_s$ from $s = t$ to $s = T$, it follows that
\begin{eqnarray}
\Gamma_T\hat{\xi}-\Gamma_t\hat{Y}_t&=&\int^{T}_{t}\Gamma_{s^-}\hat{Y}_{s^-}d\hat{Y}_s+\int^{T}_{t}\hat{Y}_{s^-}\Gamma_{s^-}d\Gamma_s+\int^{T}_{t}d[\Gamma,\hat{Y}]_s\nonumber\\
&=& -\int_{t}^{T}\Gamma_{s^-}[a_s\hat{Y}_{s^-}+\sum_{i=1}^{m}\beta^{i}_s\hat{Z}^{(i)}_s+\hat{f}_s]ds
-\int_{t}^{T}\Gamma_{s^-}b_s\hat{Y}_{s^-}dA_s-\int_{t}^{T}\Gamma_{s^-}\hat{Y}_{s^-}c_sdB_s\nonumber\\
&&+\sum^{m}_{i=1}\int_t^T\Gamma_{s^-}\hat{Z}^{(i)}_sdH^{(i)}_s+\int_{t}^{T}a_s\hat{Y}_{s^-}\Gamma_{s^-}ds+\int_{t}^{T}\hat{Y}_{s^-}\Gamma_{s^-}b_sdA_s
+\int_{t}^{T}\hat{Y}_{s^-}\Gamma_{s^-}c_sdB_s\nonumber\\
&&+\sum_{i=1}^{m}\int_{t}^{T}\hat{Y}_{s^-}\Gamma_{s^-}\beta^{i}_sdH^{(i)}_s
+\sum_{i=1}^{m}\sum_{j=1}^{m}\int_{t}^{T}\Gamma_{s^-}\beta^{i}_s\hat{Z}_{s}^{(j)}d[H^{(i)},H^{(j)}]_s\nonumber\\
&=& -\int_{t}^{T}\Gamma_{s^-}[\sum_{i=1}^{m}\beta^{i}_s\hat{Z}^{(i)}_s+\hat{f}_s]ds
+\sum^{m}_{i=1}\int_t^T\Gamma_{s^-}\hat{Z}^{(i)}_sdH^{(i)}_s+\sum_{i=1}^{m}\int_{t}^{T}\hat{Y}_{s^-}\Gamma_{s^-}\beta^{i}_sdH^{(i)}_s\nonumber\\
&&+\sum_{i=1}^{m}\sum_{j=1}^{m}\int_{t}^{T}\Gamma_{s^-}\beta^{i}_s\hat{Z}_{s}^{(j)}d[H^{(i)},H^{(j)}]_s.\label{Girsanov}
\end{eqnarray}
By Davis's inequality, we know that $\displaystyle{\sum^{m}_{i=1}\int_t^T\Gamma_{s^-}\hat{Z}^{(i)}_sdH^{(i)}_s}$ and $\displaystyle{\sum_{i=1}^{m}\int_{t}^{T}\hat{Y}_{s^-}\Gamma_{s^-}\beta^{i}_sdH^{(i)}_s}$ are martingales.
Since $\displaystyle{\sum^{m}_{i=1}\int_t^T\hat{Z}^{(i)}_sdH^{(i)}_s}$ and $\displaystyle{\sum_{i=1}^{m}\int_{t}^{T}\Gamma_{s^-}\beta^{i}_sdH^{(i)}_s}$ are square integrable martingales, we have
\begin{eqnarray*}
\E\left[\sum_{i=1}^{m}\sum_{j=1}^{m}\int_{t}^{T}\Gamma_{s^-}\beta^{i}_s\hat{Z}_{s}^{(j)}d[H^{(i)},H^{(j)}]_s|\mathcal{F}_t\right]
&=&\E\left[\sum_{i=1}^{m}\sum_{j=1}^{m}\int_{t}^{T}\Gamma_{s^-}\beta^{i}_s\hat{Z}_{s}^{(j)}d\langle H^{(i)},H^{(j)}\rangle_s|\mathcal{F}_t\right]\\
&=&\E\left[\sum_{i=1}^{m}\int_{t}^{T}\Gamma_{s^-}\beta^{i}_s\hat{Z}_{s}^{(i)}ds|\mathcal{F}_t\right].
\end{eqnarray*}
Thus, from $(\ref{Girsanov})$, and taking conditional expectation w.r.t. $\mathcal{F}_t$, we conclude that
\begin{eqnarray*}
\Gamma_t\hat{Y}_t=\E\left[\Gamma_T\hat{\xi}+\int_{t}^{T}\Gamma_{s^-}\hat{f}_sds|\mathcal{F}_t\right]\geq 0.
\end{eqnarray*}
It is clear that $\hat{Y}_t\geq 0$ and that if $\P(\hat{\xi} > 0) > 0$ then $\P(\hat{Y}_t > 0) > 0$. The proof of the theorem
is complete.
\end{proof}
\section{Reflected generalized backward doubly stochastic differential equation driven by L\'{e}vy processes}
This section is devoted to the study of reflected GBDSDEs driven by the L\'{e}vy processes $(\ref{a1})$, one of our main goal in this paper.  First of all let us give a definition to the solution of this reflected GBDSDEs driven by L\'{e}vy processes.
\begin{definition}
By a solution of the reflected GBDSDE $(\xi,f,\phi,g,S)$
driven by L\'{e}vy processes we mean a triplet of processes  $(Y,Z,K)\in\mathcal{E}$, which satisfied
\begin{eqnarray}
Y_{t}&=&\xi+\int_{t}^{T}f(s,Y_{s^-},Z_{s})ds+\int_{t
}^{T}\phi(s,Y_{s^-})dA_s+\int_{t}^{T}g(s,Y_{s^-})\,dB_{s}\nonumber\\
&&-\sum_{i=1}^{\infty}\int_{t}^{T}Z^{(i)}_{s}dH^{(i)}_{s}+K_{T}-K_t,\,\ 0\leq t\leq
T.\label{a1}
\end{eqnarray}
such that the following holds $\P$-a.s
\begin{description}
\item  $(i)$ $Y_{t}\geq S_{t},\,\,\,\ 0\leq t\leq T$,\,\
\item  $(ii)$\ $\displaystyle \int_{0}^{ T}\left( Y_{t^-}-S_{t}\right)
dK_{t}=0$.
\end{description}
\end{definition}

In the sequel, $C$ denotes a finite constant which may take different values
from line to line and usually is strictly positive.
\begin{theorem}\label{Th:exis-uniq}
Under the hypotheses $({\bf H1})$,\ $({\bf H2})$ and
$({\bf H3})$, there exists a unique solution for the reflected
generalized BDSDE $(\xi,f,\phi,g,S)$ driven by L\'{e}vy processes.
\end{theorem}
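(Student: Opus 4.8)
The plan is to construct the solution by penalization, taking the non-reflected GBDSDE of Proposition~\ref{HY2} as the elementary block. For each $n\ge 1$ set $f_n(s,y,z)=f(s,y,z)+n(y-S_s)^{-}$; this driver still satisfies $(\textbf{H2})$ (with an $n$-dependent Lipschitz constant in $y$, harmless for fixed $n$), so Proposition~\ref{HY2} yields a unique $(Y^n,Z^n)\in\mathcal{S}^2(\R)\times\mathcal{M}^2(\R^m)$ solving the GBDSDE with data $(\xi,f_n,\phi,g)$. Put $K^n_t=n\int_0^t(Y^n_s-S_s)^{-}\,ds$, a continuous increasing process with $K^n_0=0$. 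Since $f_n\le f_{n+1}$, the linearization argument of Theorem~\ref{Thm:comp} (the coefficients $\beta^i$ being bounded there because $f$ is Lipschitz and the penalty depends only on $y$) gives $Y^n_t\le Y^{n+1}_t$ for all $t$, a.s., hence $Y^n\uparrow Y$ pointwise.

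The core is a family of a priori bounds uniform in $n$, obtained by applying It\^{o}'s formula to $e^{\mu A_t}|Y^n_t|^2$ on $[t,T]$ with $\mu$ chosen large relative to the constant $c$ in $(\textbf{H2})(b)$. The linear growth in $(\textbf{H2})(a)$ together with $(\textbf{H1})$ controls the $f_t,\phi_t,g_t$ contributions; the dissipativity $(\textbf{H2})(b)(iii)$ with $\beta<0$ absorbs the $\phi$-term into the $e^{\mu A_t}|Y^n_t|^2\,dA_t$ part; the reflection term is handled via the identity $\int_0^T(Y^n_s-S_s)\,dK^n_s=-n\int_0^T\big((Y^n_s-S_s)^{-}\big)^2\,ds\le 0$ combined with Young's inequality against $\sup_t|S^+_t|$ from $(\textbf{H3})$ to bound $\int_0^T S_s\,dK^n_s$; and the backward $dB$-integral and the forward $dH^{(i)}$-integrals are local martingales whose brackets are controlled exactly as in the proof of Theorem~\ref{Thm:comp}, Burkholder--Davis--Gundy then giving the $\sup_t$ control. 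This delivers
\begin{equation*}
\sup_{n}\,\E\Big(\sup_{0\le t\le T}|Y^n_t|^2+\int_0^T|Y^n_t|^2\,dA_t+\int_0^T\|Z^n_t\|^2\,dt+|K^n_T|^2\Big)\le C .
\end{equation*}
With the monotone limit $Y^n\uparrow Y$ this already gives $Y\in\mathcal{S}^2(\R)$. A second application of It\^{o} to $e^{\mu A_t}|Y^n_t-Y^p_t|^2$, in which the cross term $\int_t^T(Y^n_s-Y^p_s)\,d(K^n-K^p)_s$ is shown to be nonpositive up to a remainder bounded by $\E[K^n_T\sup_t(Y^p_t-S_t)^{-}]+\E[K^p_T\sup_t(Y^n_t-S_t)^{-}]$, shows that $(Y^n)$ is Cauchy in $\mathcal{S}^2(\R)$ and $(Z^n)$ is Cauchy in $\mathcal{M}^2(\R^m)$; the remainder vanishes as $n,p\to\infty$ by Dini's theorem applied to the increasing convergence $(Y^p-S)^{-}\downarrow 0$, together with uniform integrability of $\{K^n_T\}$. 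From the equation, $(K^n)$ is then Cauchy in $\mathcal{S}^2(\R)$ with limit $K\in\mathcal{A}^2(\R)$ (continuous because the convergence is uniform).

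Passing to the limit in the penalized equation, using the Lipschitz continuity of $f,g,\phi$ for the driver terms and dominated convergence for the $dA_t$-integral, shows that $(Y,Z,K)\in\mathcal{E}^{2,m}$ satisfies $(\ref{a1})$ with $Y_t\ge S_t$ for all $t$. The step I expect to be the main obstacle is the Skorokhod minimality condition $\int_0^T(Y_{t^-}-S_t)\,dK_t=0$: I would argue that $\int_0^T(Y^n_s-S_s)\,dK^n_s\to\int_0^T(Y_s-S_s)\,dK_s$, using that along a subsequence $Y^n-S\to Y-S$ uniformly in $t$ a.s.\ (Dini, since the convergence is monotone and limits are continuous) while $dK^n\rightharpoonup dK$ weakly as measures on $[0,T]$ because $K^n\to K$ uniformly; since the left-hand side equals $-n\int_0^T\big((Y^n_s-S_s)^{-}\big)^2\,ds\le 0$ and the limit is $\ge 0$ (as $Y\ge S$ and $K$ is increasing), it must be zero. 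Finally, for uniqueness I would take two solutions $(Y,Z,K)$ and $(Y',Z',K')$, apply It\^{o} to $e^{\mu A_t}|Y_t-Y'_t|^2$, note that the flatness of $K$ and $K'$ forces $\int_0^T(Y_{t^-}-Y'_{t^-})\,d(K-K')_t\le 0$, and choose $\mu$ large enough that the Lipschitz and dissipativity constants are dominated, which yields $Y=Y'$ and then $Z=Z'$ and $K=K'$.
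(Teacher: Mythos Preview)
Your overall architecture --- penalization via $f_n=f+n(y-S_s)^-$, comparison to get $Y^n\uparrow$, uniform a priori bounds, a Cauchy estimate whose remainder is controlled by $\E\big[\sup_t(Y^n_t-S_t)^-\cdot K^p_T\big]$, and passage to the limit --- matches the paper's, and your uniqueness argument is essentially the paper's. The genuine gap is the step you dismiss with ``Dini's theorem applied to the increasing convergence $(Y^p-S)^-\downarrow 0$'' (and again later, ``Dini, since the convergence is monotone and limits are continuous''). Two problems. First, Dini requires continuous paths, but here $Y^n$ and the limit $Y$ are only c\`adl\`ag (they inherit jumps from the $H^{(i)}$), and $S$ is merely progressively measurable under $(\textbf{H3})$; monotone pointwise convergence to $0$ does not yield uniform-in-$t$ convergence in this setting. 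Second, to even assert the \emph{pointwise} limit $(Y^p_t-S_t)^-\downarrow 0$ for every $t$ you must already know $Y_t\ge S_t$ for every $t$; the bound $\E|K^n_T|^2\le C$ gives this only for Lebesgue-a.e.\ $t$, which does not control the $\sup_t$ entering your Cauchy remainder.

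The paper closes this gap via a dedicated lemma (Lemma~\ref{lem:2}) whose proof introduces the shifted processes $\overline{Y}^n_t=Y^n_t+\int_t^T g(s)\,dB_s$ and $\overline{S}_t=S_t+\int_t^T g(s)\,dB_s$, thereby removing the backward It\^o integral from the penalized equation; it then compares $\overline{Y}^n$ with an auxiliary linear GBDSDE that can be solved explicitly with $e^{-n(s-\nu)}$-weighting, shows $\widetilde{Y}^n_\nu\to\overline{S}_\nu$ at every ${\bf G}$-stopping time $\nu$, and invokes the section theorem to conclude $Y_t\ge S_t$ for all $t$ before applying dominated convergence. This shift only makes sense when $g$ does not depend on $Y$, and that is exactly why the paper splits the existence proof into two steps: Step~1 carries out the full penalization argument for $g=g(s)$, and Step~2 handles the general $g(s,y)$ by showing that the map $\Psi:(\bar Y,\bar Z)\mapsto(Y,Z)$ is a strict contraction on $\mathcal{S}^2(\R)\times\mathcal{M}^2(\R^m)$ under an $e^{-\mu s}$-weighted norm. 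Your one-shot scheme, which keeps $g$ depending on $Y$ throughout the penalization, has no access to this shift and therefore leaves the crucial convergence $\E\sup_t|(Y^n_t-S_t)^-|^2\to 0$ unproved.
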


Our proof is based on a penalization method from El Karoui et al \cite{Kal}.

For each $n\in\N^{*}$ we set
\begin{eqnarray}\label{a0}
f_{n}(s,y,z)=f(s,y,z)+n(y-S_{s})^{-}
\end{eqnarray}
and let $(Y^{n},Z^{n})$ be the $\mathcal{F}_t$-progressively measurable
process with values in $\R\times \R^{m}$ unique solution of the GBDSDE
with $(\xi,f_n,g)$ driven by the L\'{e}vy processes. It exists according to Proposition $\ref{HY2}$. So
\begin{eqnarray*}
\E\left(\sup_{0\leq t\leq
T}|Y^{n}_{t}|^{2}+\int_0^T\left\|Z_s^{n}\right\|^2 ds
\right)<\infty,
\end{eqnarray*}
and
\begin{eqnarray}
Y_{t}^{n}&=&\xi+\int_{t}^{T}f(s,Y_{s^-}^{n},Z_{s}^{n})ds+\int_{t}^{T}(Y^{n}_{s^-}-S_s)^{-}ds+\int_{t}^{T}\phi(s,Y_{s^-}^{n})dA_s\nonumber\\
&&+\int_{t}^{T}g(s,Y_{s^-}^{n})\,dB_{s}-\sum_{i=1}^{m}\int_{t}^{T}(Z_{s}^{n})^{(i)}dH^{(i)}_{s}. \label{h2}
\end{eqnarray}
Set
\begin{eqnarray}
K_{t
}^{n}=n\int_{0}^{t}(Y_{s^-}^{n}-S_{s})^{-}ds , \,\ 0\leq t\leq
T.\label{K}
\end{eqnarray}
In order to prove Theorem \ref{Th:exis-uniq}, we state the following
lemma that will be useful.
\begin{lemma}\label{lem:1}
Let us consider  $(Y^{n},Z^{n})\in \mathcal{S}^{2}(\R)\times
{\mathcal{M}}^{2}(\R^{m})$ solution of GBDSDE $(\ref{h2})$. Then there exists
$C>0$ such that,
\begin{eqnarray*}
\sup_{n\in\N^{*}}\E\left( \sup_{0\leq t\leq T}\left|
Y_{t}^{n}\right| ^{2}+\int_{t}^{T}\left| Y_{s}^{n}\right| ^{2}
dA_s+\int_{t}^{T}\left\| Z_{s}^{n}\right\| ^{2}
ds+|K^{n}_{T}|^{2}\right)<C
\end{eqnarray*}
\end{lemma}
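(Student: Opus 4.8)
The plan is to apply It\^{o}'s formula to $e^{\mu A_t}|Y_t^n|^2$ on $[t,T]$ (where $\mu>0$ is a parameter to be chosen large) and to exploit the doubly-stochastic structure: the backward It\^{o} integral $\int g\,dB$ contributes a $+\int g^2\,ds$ term to the quadratic variation with a \emph{minus} sign (owing to the backward convention), while the forward integral against the Teugels martingales $H^{(i)}$ produces the $\sum_i\int (Z^{n,(i)})^2\,ds$ term via the strong orthonormality $d\langle H^{(i)},H^{(j)}\rangle_s=\delta_{ij}\,ds$. First I would write out the expansion, keeping $\mu$ generic: this yields, after taking expectations and using that the stochastic integrals against $B$ and $H^{(i)}$ are true martingales,
\begin{eqnarray*}
\E\left(e^{\mu A_t}|Y_t^n|^2+\mu\int_t^T e^{\mu A_s}|Y_s^n|^2\,dA_s+\int_t^T e^{\mu A_s}\|Z_s^n\|^2\,ds\right)
&\leq& \E\left(e^{\mu A_T}|\xi|^2\right)+2\E\int_t^T e^{\mu A_s}Y_{s^-}^n f(s,Y_{s^-}^n,Z_s^n)\,ds\\
&&+\,2\E\int_t^T e^{\mu A_s}Y_{s^-}^n\phi(s,Y_{s^-}^n)\,dA_s+\E\int_t^T e^{\mu A_s}|g(s,Y_{s^-}^n)|^2\,ds\\
&&+\,2\E\int_t^T e^{\mu A_s}Y_{s^-}^n\,(Y_{s^-}^n-S_s)^-\,ds.
\end{eqnarray*}

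Next I would estimate each term on the right. Using the growth bounds in $({\bf H2})(a)$ together with Young's inequality $2ab\leq \varepsilon a^2+\varepsilon^{-1}b^2$, the term with $f$ is controlled by $C\E\int_t^T e^{\mu A_s}(f_s^2+|Y_s^n|^2)\,ds+\tfrac12\E\int_t^T e^{\mu A_s}\|Z_s^n\|^2\,ds$, the last piece being absorbed by the left side. The term with $g$ is bounded by $C\E\int_t^T e^{\mu A_s}(g_s^2+|Y_s^n|^2)\,ds$. For the $\phi$ term, the monotonicity condition $({\bf H2})(b)(iii)$ applied with $y_2=0$ (comparing $\phi(t,y)$ with $\phi(t,0)$ and absorbing $\phi(t,0)$ into $\phi_t$ via the bound $|\phi(t,0)|\le \phi_t$) gives $2Y_{s^-}^n\phi(s,Y_{s^-}^n)\le 2\beta|Y_{s^-}^n|^2+C\phi_s|Y_{s^-}^n|\le \mu|Y_{s^-}^n|^2/2 + C\phi_s^2$ once $\mu$ is chosen large enough (recall $\beta<0$, so this actually helps), and the $dA_s$-integral of $\mu|Y_s^n|^2/2$ is absorbed by the left side. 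The crucial point is the reflection term $2\E\int_t^T e^{\mu A_s}Y_{s^-}^n(Y_{s^-}^n-S_s)^-\,ds$: here I write $Y_{s^-}^n(Y_{s^-}^n-S_s)^-=\big((Y_{s^-}^n-S_s)+S_s\big)(Y_{s^-}^n-S_s)^-=-|(Y_{s^-}^n-S_s)^-|^2+S_s(Y_{s^-}^n-S_s)^-\le S_s^+(Y_{s^-}^n-S_s)^-$, and then $2 S_s^+ (Y_{s^-}^n-S_s)^- \le \tfrac14 |(Y_{s^-}^n-S_s)^-|^2\cdot\text{(something)}$ — more precisely, I keep a fraction of the $-|(Y^n-S)^-|^2$ term and use $2S_s^+(Y_{s^-}^n-S_s)^-\le \tfrac{1}{n}|(Y_{s^-}^n-S_s)^-|^2\cdot n + n|S_s^+|^2$... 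Let me instead use the cleaner split: $2Y_{s^-}^n(Y_{s^-}^n-S_s)^- \le -2|(Y_{s^-}^n-S_s)^-|^2 + 2S_s^+(Y_{s^-}^n-S_s)^- \le -|(Y_{s^-}^n-S_s)^-|^2 + |S_s^+|^2$, so that the negative term dominates and contributes, after multiplying by $n$... wait, the factor $n$ sits inside $f_n$, not here — in \eqref{h2} the reflection term appears \emph{without} the factor $n$. Hence $2\E\int_t^T e^{\mu A_s}Y_{s^-}^n(Y_{s^-}^n-S_s)^-\,ds\le -\E\int_t^T e^{\mu A_s}|(Y_{s^-}^n-S_s)^-|^2\,ds+\E\int_t^T e^{\mu A_s}|S_s^+|^2\,ds$, and the first term is simply dropped (it has the good sign), while the second is finite by $({\bf H3})$ and boundedness of $A$.

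Collecting terms and choosing $\mu$ large enough that the $|Y_s^n|^2\,dA_s$ coefficients on the right are dominated by those on the left, I obtain
\begin{eqnarray*}
\E\left(e^{\mu A_t}|Y_t^n|^2+\int_t^T e^{\mu A_s}|Y_s^n|^2\,dA_s+\int_t^T e^{\mu A_s}\|Z_s^n\|^2\,ds\right)\le C+C\,\E\int_t^T e^{\mu A_s}|Y_s^n|^2\,ds,
\end{eqnarray*}
with $C$ independent of $n$ (here $({\bf H1})$ and the integrability in $({\bf H2})(a)$ furnish the constant). Gronwall's lemma in the variable $t$ then gives a uniform bound on $\E(e^{\mu A_t}|Y_t^n|^2)$, hence, feeding this back, uniform bounds on $\E\int_0^T|Y_s^n|^2\,dA_s$ and $\E\int_0^T\|Z_s^n\|^2\,ds$ (using $A_T$ bounded so that $e^{\mu A_s}\ge 1$). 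For the supremum bound I return to \eqref{h2}, take the supremum over $t$ before taking expectations, and handle $\E\sup_t|\int_t^T g\,dB|$ and $\E\sup_t|\sum_i\int_t^T Z^{n,(i)}\,dH^{(i)}|$ by the Burkholder–Davis–Gundy inequality, which converts them into $C\E(\int_0^T g^2\,ds)^{1/2}$-type quantities already controlled; the $f$, $\phi$ and reflection integrals are bounded pathwise by their absolute values and then estimated as above. Finally, from \eqref{h2} with $t=0$, $K_T^n = Y_0^n - \xi - \int_0^T f\,ds - \int_0^T\phi\,dA - \int_0^T g\,dB + \sum_i\int_0^T Z^{n,(i)}\,dH^{(i)}$, so $\E|K_T^n|^2$ is bounded by a constant times the sum of the quantities already controlled, again using the growth hypotheses and BDG. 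The main obstacle is the bookkeeping in choosing $\mu$ so that every $dA_s$-term with a $|Y^n|^2$ factor (coming from $f$, $\phi$, and $g$ after Young's inequality) is absorbed — the monotonicity constant $\beta<0$ is what makes this possible without any smallness assumption — and making sure the reflection term is handled purely through its favorable sign so that no factor of $n$ leaks into the estimate.
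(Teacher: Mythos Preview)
Your treatment of the penalization term is where the argument breaks. The display \eqref{h2} in the paper has a typographical slip: the penalization should read $n\int_t^T(Y_{s^-}^n-S_s)^-\,ds$, consistently with $f_n(s,y,z)=f(s,y,z)+n(y-S_s)^-$ and with $K_t^n=n\int_0^t(Y_{s^-}^n-S_s)^-\,ds$ in \eqref{K}. With the correct factor $n$ your ``cleaner split'' yields
\[
2n\,Y_{s^-}^n(Y_{s^-}^n-S_s)^-\ \le\ -n|(Y_{s^-}^n-S_s)^-|^2 + n|S_s^+|^2,
\]
and the term $n\,\E\int_t^T e^{\mu A_s}|S_s^+|^2\,ds$ blows up with $n$, destroying the uniform estimate. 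Dropping the negative piece does not save you; nor does any Young split of $2nS^+(Y^n-S)^-$ that keeps the weights symmetric in $n$.

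The paper closes this gap by a different mechanism: it writes $\int_t^T Y_{s^-}^n\,dK_s^n\le \int_t^T S_s\,dK_s^n$ and then estimates $2\int_t^T S_s\,dK_s^n\le \varepsilon^{-1}\sup_s|S_s^+|^2+\varepsilon\,(K_T^n-K_t^n)^2$. The key extra step is to derive, \emph{from the equation itself} (your final paragraph does this, but only as an afterthought), the bound
\[
\E(K_T^n-K_t^n)^2\ \le\ C\,\E\Big(|\xi|^2+\int_0^T f_s^2\,ds+\int_0^T\phi_s^2\,dA_s+\int_0^T g_s^2\,ds+\int_0^T|Y_s^n|^2(ds+dA_s)+\int_0^T\|Z_s^n\|^2\,ds\Big),
\]
substitute it back into the energy inequality, and choose $\varepsilon$ so small that $\varepsilon C$ is dominated by the coefficients on the left. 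Only then does Gronwall apply. Your use of the exponential weight $e^{\mu A_t}$ in place of the paper's bare $|Y_t^n|^2$ is a legitimate variant for absorbing the $dA_s$ terms (and is indeed how the hypotheses on $e^{\mu A_T}$ are meant to be used), but it does not circumvent the need for this $\varepsilon$-coupling between the a priori estimate and the expression of $K^n$ from the equation.
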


\begin{proof}
From It\^{o}'s formula, we have
\begin{align}\label{b2}
\left| Y_{t}^{n}\right|^{2}
&=\left| \xi\right|
^{2}+2\int_{t}^{T}Y_{s^-}^{n}f(s,Y_{s^-}^{n},Z_{s}^{n}) ds
+2\int_{t}^{T}Y_{s^-}^{n}\phi(s,Y_{s^-}^{n})dA_s\nonumber\\
&+\int_{t}^{T}|g(s,Y_{s^-}^{n})|^{2}ds +2\int_{t}^{T} Y_{s^-}^n dK_{s}^{n}+ 2\int_{t}^{T}Y_{s^-}^{n}
g(s,Y_{s^-}^{n})dB_{s}\nonumber\\
&-2\sum_{i=1}^{m}\int_{t}^{T}Y_{s^-}^{n}(Z_{s}^{n})^{(i)}dH^{(i)}_{s}-\sum_{i,j=1}^{m}\int_{t}^{T}(Z_{s}^{n})^{(i)}(Z_{s}^{n})^{(j)}d[H^{(i)}_{s},H^{(j)}_{s}].
\end{align}
Note that $\int_{t}^{T}Y_{s^-}^{n}g(s,Y_{s^-}^{n})dB_{s},\;\int_{t}^{T}Y_{s^-}^{n}(Z_{s}^{n})^{(i)}dH^{(i)}_{s}$, for $i\geq 1$ and $
\int_{t}^{T}(Z_{s}^{n})^{(i)}(Z_{s}^{n})^{(j)}d[H^{(i)}_{s},H^{(j)}_{s}]$ for $i\neq j$ are uniformly
integrable martingales. Taking the expectation, we get
\begin{align*}
&\E\left| Y_{t}^{n}\right|^{2}+\int_{t}^{T}\|Z_{s}^{n}\|^{2}ds\\
&\leq\left| \xi\right|
^{2}+2\E\int_{t}^{T}Y_{s^-}^{n}f(s,Y_{s^-}^{n},Z_{s}^{n}) ds
+2\E\int_{t}^{T}Y_{s^-}^{n}\phi(s,Y_{s^-}^{n})dA_s\nonumber\\
&+\E\int_{t}^{T}|g(s,Y_{s^-}^{n})|^{2}ds +2\E\int_{t}^{T} Y_{s^-}^n dK_{s}^{n},
\end{align*}
where we have used
$\displaystyle{\int_{t}^{T}(Y^{n}_{s^-}-S_{s})dK^{n}_{s}\leq 0}$ and
the fact that
\begin{eqnarray*}
\int_{t}^{T}Y^{n}_{s}dK^{n}_{s}&=&
\int_{t}^{T}(Y^{n}_{s^-}-S_{s})dK^{n}_{s}+\int_{t}^{T}S_{s}dK^{n}_{s}\leq
\int_{t}^{T}S_{s}dK^{n}_{s}.
\end{eqnarray*}
Using $(\textbf{H2})$ and the elementary inequality $2ab\leq \gamma
a^{2}+\frac{1}{\gamma} b^2,\ \forall\gamma>0$,
\begin{eqnarray*}
2Y_{s}^{n}f(s,Y_{s}^{n},Z_{s}^{n})&\leq&(c\gamma_1+\frac{1}{\gamma_1})|Y^{n}_{s}|^{2}
+2c\gamma_1\|Z^{n}_{s}\|^{2}+2\gamma_1 f_s^{2},\\
2Y_{s}^{n}\phi(s,Y_{s}^{n})&\leq& (\gamma_2-2|\beta|)|Y^{n}_{s}|^{2}+\frac{1}{\gamma_2}\phi_s^{2},\\
|g(s,Y_{s}^{n})|^{2}
&\leq&2c|Y^{n}_{s}|^{2}+2g_s^{2}.
\end{eqnarray*}
Taking expectation in both sides of the inequality (\ref{b2}) and
choosing $\displaystyle\gamma_1=\frac{1}{4c}$,
$\displaystyle\gamma_2=|\beta|$, we obtain for all
$\varepsilon >0$
\begin{align}\label{a2}
&
\E\left|Y_{t}^{n}\right|^{2}+|\beta|\E\int_{t}^{T}\left|Y_{s}^{n}\right|^{2}dA_s
+\frac{1}{2}\E\int_{t}^{T}\left\|Z_{s}^{n}\right\|^{2}ds \nonumber\\
&\leq
C\E\left\{|\xi|^{2}+\int^{t}_{0}|Y^{n}_{s}|^{2}ds+\int^{t}_{0}f_s^{2}ds+
\int^{t}_{0}\phi_s^{2}dA_s+\int^{t}_{0}g_s^{2}ds\right\}\nonumber\\
& +\frac{1}{\varepsilon}\E\left(\sup_{0\leq s\leq
t}(S_{s}^+)^2\right)+\varepsilon\E \left(K_{T}^{n}-K_t^n\right)^2.
\end{align}
On the other hand, we get from (\ref{h2}) that for all $0 \leq t
\leq T$,
\begin{equation}\label{K:n}
K_t^n=Y_t^n-\xi-\int_0^t f(s,Y_{s^-}^n,Z_s^n) ds -\int_0^t
\phi(s,Y_{s^-}^n) dA_s-\int_0^t g(s,Y_{s^-}^n) dB_s+\sum_{i=1}^{m}\int_0^t (Z_s^n)^{(i)}dH^{(i)}_s.
\end{equation}
So by used standard computations, we get
\begin{align}\label{kn1}
\E(K^n_T-K^{n}_t)^2\leq C\E\left\{|\xi|^{2}+\int^{t}_{0}f_s^{2}ds+
\int^{t}_{0}\phi_s^{2}dA_s+\int^{t}_{0}g_s^{2}ds+\int_0^t
\left|Y_s^n\right|^2ds\right.\nonumber\\
\left.+\E\left(\sup_{0\leq s\leq
t}(S_{s}^+)^2\right)+\int_0^t
\left|Y_s^n\right|^2dA_s+\int_0^t\|Z_s^n\|^2ds\right\}.
\end{align}
Substituting Equation $(\ref{kn1})$ to Equation $(\ref{a2})$ and choosing $\varepsilon$ small enough such that $\varepsilon C<\min(1/2,|\beta|)$,
yields
\begin{align*}
&
\E\left\{|Y_{t}^{n}|^{2}+\int_{t}^{T}\left|Y_{s}^{n}\right|^{2}dA_s
+\int_{t}^{T}\|Z_{s}^{n}\|^{2}ds+|K_{T}^{n}|^{2}\right\}\\
&\leq
C\E\left\{|\xi|^{2}+\int^{T}_{0}f_s^{2}ds+\int^{T}_{0}\phi_s^{2}dA_s
+\int^{T}_{0}g_s^{2}ds+\sup_{0\leq t\leq T}(S_{t}^{+})^{2}\right\}.
\end{align*}
From this, Gronwall's inequality and the Burkholder-Davis-Gundy
inequality \cite{DM}, we get
\begin{eqnarray*}
\E\left\{\sup_{0\leq t\leq T}
|Y_{t}^{n}|^{2}+\int_{t}^{T}\|Z_{s}^{n}\|^{2}ds+|K_{T}^{n}|^{2}\right\}
&\leq & C\E\left\{|\xi|^{2}+\int^{T}_{0}f_s^{2}ds+\int^{T}_{0}\phi_s^{2}dA_s\right.\nonumber\\
&&+\left.\int^{T}_{0}g_s^{2}ds+\sup_{0\leq t\leq
T}(S_{t}^{+})^{2}\right\},
\end{eqnarray*}
which end the proof of this Lemma.
\end{proof}

\noindent \begin{proof}[Proof of Theorem \ref{Th:exis-uniq}] {\bf
Existence} The proof of existence will be divided in two steps.

\noindent \textit{Step}\ 1.  $g $ does not dependent on $\left(
Y,Z\right)$.
More precisely, we consider the following
equation
\begin{eqnarray}
Y_{t}&=&\xi+\int_{t}^{T}f(s,Y_{s^-},Z_s)ds+
\int_{t}^{T}\phi(s,Y_{s^-})dA_s+\int_{t}^{T}g(s)\,dB_{s}\nonumber\\
&&-\sum^{m}_{i=1}\int_{t}^{T}Z^{(i)}_{s}dH^{(i)}_{s}+K_T-K_t, \; 0\leq t\leq T.\label{h4}
\end{eqnarray}
The penalized equation is given by
\begin{eqnarray}
Y_{t}^{n}&=&\xi+\int_{t}^{T}f(s,Y_{s^-}^n,Z_s^n)ds+n\int_{t}^{T}(Y_{s^-}^{n}-S_{s})^{-}ds+
\int_{t}^{T}\phi(s,Y_{s^-}^{n})dA_s\nonumber\\
&&+\int_{t}^{T}g(s)\,dB_{s}-\sum^{m}_{i=1}\int_{t}^{T}(Z_{s}^{n})^{(i)}_{s}dH^{(i)}_{s}, \; 0\leq t\leq T.
\label{h3}
\end{eqnarray}
\noindent Since the sequence of functions $(y\mapsto
n(y-S_{t})^{-})_{n\geq 1}$ is nondecreasing, then  thanks to the
comparison theorem \ref{Thm:comp}, the sequence $\left( Y^{n}\right)
_{n>0}$ is non-decreasing. Hence, Lemma \ref{lem:1} implies that
there exists a $\mathcal{F}_t$- progressively measurable process $Y$
such that $Y_{t}^{n}\nearrow Y_{t}$ $a.s$.
\noindent Recall that $Y_{t}^{n}\nearrow Y_{t}$ $a.s$. Then,
Fatou's lemma and Lemma \ref{lem:1} ensure
\begin{eqnarray*}
\E\left( \sup_{0\leq t\leq T }\left| Y_{t}\right| ^{2}\right)
<+\infty,
\end{eqnarray*}
It then follows from Lemma \ref{lem:1} and Lebegue's dominated
convergence theorem  that
\begin{eqnarray}
\E\left( \int_{0}^{T }\left| Y_{s}^{n}-Y_{s}\right| ^{2} ds
\right)\longrightarrow 0,\,\,\ \mbox{as}\,\  n\rightarrow
\infty.\label{b9}
\end{eqnarray}
Next, for\ $n \geq p \geq 1$, by It\^{o}'s formula and together with assumptions
$({\bf H2})$, yields
\begin{eqnarray*}
&&\E\left\{\left| Y_{t}^{n}-Y_{t}^{p}\right| ^{2}+
\int_{t}^{T}\left| Y_{s}^{n}-Y_{s}^{p}\right| ^{2}dA_s +
\int_{t}^{T}\left\|
Z_{s}^{n}-Z_{s}^{p}\right\| ^{2}ds\right\} \\
&\leq &
C\E\left\{\int_{t}^{T}|Y_{s}^{n}-Y_{s}^{p}|^{2}ds+\sup_{0\leq s\leq
T}\left(Y_{s}^{n}-S_{s}\right)^{-}K_{T}^{p} +\sup_{0\leq s\leq
T}\left( Y_{s}^{p}-S_{s}\right) ^{-} K_{T}^{n}\right\},
\end{eqnarray*}
which, by Gronwall lemma, H\"{o}lder inequality and Lemma \ref{lem:1}
respectively, implies
\begin{eqnarray}
\E\left\{\left| Y_{t}^{n}-Y_{t}^{p}\right| ^{2} + \int_{t}^{T}\left\|
Z_{s}^{n}-Z_{s}^{p}\right\| ^{2}ds\right\} &\leq &
C\left\{\E\left(\sup_{0\leq s\leq T}|\left( Y_{s}^{n}-S_{s}\right)
^{-}|^{2}\right)\right\}^{1/2}\nonumber\\
&&+C\left\{ \E\left(\sup_{0\leq s\leq T}|\left(
Y_{s}^{p}-S_{s}\right) ^{-}|^{2}\right)\right\}^{1/2}. \label{b11'}
\end{eqnarray}
Let us admit for the moment the following result.
\begin{lemma}\label{lem:2}
If $g $ does not dependent on $\left( Y,Z\right)$, then for
each $n\in\N^{*}$,
\begin{eqnarray*}
\E\left(\sup_{0\leq t\leq T }\left|\left( Y_{t}^{n}-S_{t}\right)
^{-}\right| ^{2}\right)\longrightarrow 0,\,\,\ \mbox{as}\,\ n
\longrightarrow \infty.
\end{eqnarray*}
\end{lemma}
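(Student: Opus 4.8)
The plan is to combine the monotone convergence $Y^{n}\uparrow Y$ just obtained with the uniform bound $\sup_{n}\E|K^{n}_{T}|^{2}\le C$ of Lemma \ref{lem:1}. First I will exploit the explicit form $K^{n}_{T}=n\int_{0}^{T}(Y^{n}_{s^-}-S_{s})^{-}\,ds$ of the penalizing term: the uniform $L^{2}$-bound on $K^{n}_{T}$ gives
\[
\E\int_{0}^{T}(Y^{n}_{s^-}-S_{s})^{-}\,ds=\frac1n\,\E K^{n}_{T}\le\frac{C^{1/2}}{n}\longrightarrow 0\quad\text{as }n\to\infty .
\]
Since $x\mapsto(x-S_{s})^{-}$ is nonincreasing, the integrands $(Y^{n}_{s^-}-S_{s})^{-}$ decrease pointwise along $Y^{n}_{s^-}\uparrow Y_{s^-}$, so monotone convergence forces $\E\int_{0}^{T}(Y_{s^-}-S_{s})^{-}\,ds=0$, that is $Y_{t}\ge S_{t}$ for $dt\otimes d\P$-a.e.\ $(t,\omega)$. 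Using that the $Y^{n}$, hence $Y$, and the obstacle are right-continuous with left limits, this a.e.\ inequality upgrades to $Y_{t}\ge S_{t}$ for every $t$, $\P$-a.s.

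Next, fix $\omega$ outside a $\P$-null set. The functions $t\mapsto(Y^{n}_{t}(\omega)-S_{t}(\omega))^{-}$ are rcll, nonnegative, nonincreasing in $n$, and by the previous step converge pointwise to $(Y_{t}(\omega)-S_{t}(\omega))^{-}\equiv 0$. A Dini-type argument for monotone right-continuous sequences then yields $\sup_{0\le t\le T}(Y^{n}_{t}(\omega)-S_{t}(\omega))^{-}\to 0$. Moreover, the comparison theorem \ref{Thm:comp} gives $Y^{n}\ge Y^{1}$ for all $n$, hence the $\P$-integrable domination
\[
\sup_{0\le t\le T}\bigl((Y^{n}_{t}-S_{t})^{-}\bigr)^{2}\le\sup_{0\le t\le T}\bigl((Y^{1}_{t}-S_{t})^{-}\bigr)^{2}\le 2\sup_{0\le t\le T}(S_{t}^{+})^{2}+2\sup_{0\le t\le T}|Y^{1}_{t}|^{2},
\]
whose expectation is finite by $(\textbf{H3})$ and $Y^{1}\in\mathcal{S}^{2}(\R)$. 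Lebesgue's dominated convergence theorem then gives $\E\bigl(\sup_{0\le t\le T}|(Y^{n}_{t}-S_{t})^{-}|^{2}\bigr)\to 0$, which is the assertion.

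The delicate point is the passage from ``$Y\ge S$ a.e.'' to ``$Y\ge S$ everywhere'', together with the uniform-in-$t$ control of the negative parts; this is where the hypothesis that $g$ does not depend on $(Y,Z)$ is used, since it keeps the stochastic-integral part of the penalized equations fixed and makes the estimates above clean. If $S$ is only progressively measurable, I would instead approximate $S$ from above (or in $\mathcal{S}^{2}$) by nicer obstacles $S^{k}$, apply It\^{o}'s formula to $((Y^{n}_{t}-S^{k}_{t})^{-})^{2}$ to bound $\E\sup_{t}((Y^{n}_{t}-S^{k}_{t})^{-})^{2}$ by $C\,\E\int_{0}^{T}(Y^{n}_{s^-}-S^{k}_{s})^{-}\,ds$ plus controllable remainders, and then let $n\to\infty$ followed by $k\to\infty$; this regularization is the only step requiring care beyond routine estimates.
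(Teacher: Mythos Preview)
Your opening step---deducing $\E\int_0^T (Y^n_{s}-S_s)^{-}\,ds \to 0$ from the uniform bound on $\E(K^n_T)^2$ and then using monotonicity to obtain $Y_s \ge S_s$ for $dt\otimes d\P$-a.e.\ $(s,\omega)$---is correct and more direct than the paper's route. The domination $((Y^n-S)^-)^2 \le 2(S^+)^2 + 2|Y^1|^2$ at the end is also fine.

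The argument breaks down at the upgrade from ``a.e.'' to ``every $t$'' and at the Dini step. First, the pointwise supremum $Y_t = \sup_n Y^n_t$ of rcll processes need not be rcll: with $Y^n_t = -(1-nt)^+$ on $[0,1]$ each $Y^n$ is continuous but $Y = -\mathbf 1_{\{0\}}$ fails right-continuity at $0$. So the inference ``the $Y^n$, hence $Y$, are rcll'' is unjustified at this stage. Second, even granting rcll paths, Dini's theorem is false for rcll (as opposed to continuous) monotone sequences: with $f_n = \mathbf 1_{[1-1/n,\,1)}$ on $[0,1]$ one has $f_n$ rcll, $f_n\downarrow 0$ pointwise, yet $\sup_t f_n = 1$ for every $n$. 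In the present setting the $Y^n$ genuinely jump (they are driven by the Teugels martingales $H^{(i)}$), and the obstacle $S$ in $(\mathbf{H3})$ is only assumed progressively measurable, so neither regularity hypothesis you rely on is available. Note also that your argument as written never actually uses that $g$ is independent of $(Y,Z)$; your remark attributing the ``clean estimates'' to this hypothesis is misplaced.

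The paper closes this gap by a device that \emph{does} exploit the hypothesis on $g$. One shifts $\overline Y^n_t := Y^n_t + \int_t^T g(s)\,dB_s$ and $\overline S_t := S_t + \int_t^T g(s)\,dB_s$, thereby removing the backward It\^o integral and allowing one to work in the enlarged filtration $\mathcal G_t = \mathcal F^L_t \vee \mathcal F^B_T$, which is a genuine filtration. One then compares $\overline Y^n$ with the solution $\widetilde Y^n$ of a \emph{linear} GBDSDE (driver $n(\overline S_s - \widetilde Y^n_{s^-})$), whose value at any $\mathcal G$-stopping time $\nu$ has the explicit representation
\[
\widetilde Y^n_\nu = \E\Bigl[e^{-n(T-\nu)}\overline S_T + n\int_\nu^T e^{-n(s-\nu)}\overline S_s\,ds + \text{(vanishing terms)} \,\Big|\, \mathcal G_\nu\Bigr],
\]
which converges to $\overline S_\nu$ in $L^2$. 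This yields $Y_\nu \ge S_\nu$ a.s.\ for \emph{every} stopping time $\nu$, and the section theorem then gives $Y_t \ge S_t$ for all $t\in[0,T]$. Only after this is dominated convergence invoked. The stopping-time/section-theorem mechanism is precisely what replaces the unavailable Dini argument in the jump setting.
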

We can now conclude. Indeed, it
follows from Lemma \ref{lem:2} that,
\begin{eqnarray*}
\E\left\{\left| Y_{s}^{n}-Y_{s}^{p}\right| ^{2} +\int_{t}^{T}\left\|
Z_{s}^{n}-Z_{s}^{p}\right\| ^{2}ds\right\}\longrightarrow 0,\,\,\
\mbox{as}\,\ n,p \longrightarrow \infty. \label{h1}
\end{eqnarray*}
Finally, from Burkh\"{o}lder-Davis-Gundy's inequality, we obtain
\begin{eqnarray*}
\E\left( \sup_{0\leq s\leq T}\left| Y_{s}^{n}-Y_{s}^{p}\right|
^{2}+\int_{t}^{T}\left\| Z_{s}^{n}-Z_{s}^{p}\right\| ^{2}ds\right)
\longrightarrow 0,\,\, \mbox{ as }\,  n,p\longrightarrow
\infty,\label{b12}
\end{eqnarray*}
and from (\ref{K:n}) we can deduce
\begin{eqnarray*}
\E\left\{\sup_{0\leq s\leq T}\left| K_{s}^{n}-K_{s}^{p}\right|
^{2}\right) \longrightarrow 0,\mbox{ as }n,p\rightarrow \infty ,
\label{b13}
\end{eqnarray*}
which provides that the sequence of processes $(Y^{n},Z^{n},K^{n})$
is Cauchy in the Banach space $\mathcal{E}^{2,m}$. Consequently, there exists
a triplet $(Y,Z,K)\in\mathcal{E}^{2,m}$ such that
\begin{eqnarray*}
\E\left\{\sup_{0\leq s\leq T}\left| Y_{s}^{n}-Y_{s}^{{}}\right|
^{2}+\int_{t}^{T}\left\|Z^{n}_{s}-Z_{s}\right\|^{2}ds+\sup_{0\leq
s\leq T}\left| K_{s}^{n}-K_{s}^{{}}\right| ^{2}\right) \rightarrow
0,\mbox{ as }n\rightarrow \infty .
\end{eqnarray*}
It remains to show that $(Y,Z,K)$ solves the reflected GBDSDE driven by L\'{e}vy processes $(\xi,
f,\phi,g,S)$. In this fact, since $(Y^{n}_t,K^{n}_t)_{0\leq t\leq T}$ tends to $(Y_t,K_t)_{0\leq t\leq T}$
uniformly in t in probability, the measure $dK^n$ converges to $dK$ weakly in probability, so that $\int_{t}^{T}(Y_{s^-}^{n}-S_{s})dK_{s}^{n}\rightarrow \int_{t}^{T}(Y_{s^-}-S_{s})dK_{s}$ in probability as $n\rightarrow \infty$. Obviously, $\int_{t}^{T}(Y_{s^-}-S_{s})dK_{s}\geq 0$,
while, on the other hand, for all $n\geq 0$, $\int_{t}^{T}(Y_{s^-}^{n}-S_{s})dK_{s}^{n}\leq 0$.

Hence
\begin{eqnarray*}
\int_{t}^{T}(Y_{s^-}-S_{s})dK_{s}= 0,\;\; a.s
\end{eqnarray*}
 Finally, passing to the limit
in $(\ref{h3})$ we proved that $(Y,Z,K)$ verifies (\ref{h4}) and is the solution of the reflected GBDSDE
$(\xi,f, g, S)$ driven by the L\'{e}vy processes. We finally return to the proof of Lemma 2.2.

\begin{proof}[Proof of Lemma \ref{lem:2}]
Since $Y^{n}_{t}\geq Y^{0}_{t}$, we can w.l.o.g. replace $S_{t}$ by
$S_{t}\vee Y^{0}_{t}$, i.e. we may assume that $\E(\sup_{0\leq
t\leq T}S_{t}^{2})<\infty$. We want to compare a.s. $Y_t$ and $S_t$
for all $t\in[0,T]$. In this, let us introduce the following processes
$$
\left\{
\begin{array}{ll}
&\displaystyle \overline{\xi}:=\xi+\int_{t}^{T}g\left(
s\right)  dB_{s}\\
& \displaystyle\overline{S}_{t}:=S_{t}+\int_{t}^{T}g\left(
s\right)  dB_{s}\\
& \displaystyle\overline{Y}_{t}^{n}:=Y_{t}^{n}+\int_{t} ^{T}g\left(
s\right)  dB_{s}.
\end{array}
\right.
$$
Hence,  \begin{equation}\label{Ynbar}
\overline{Y}_{t}^{n}=\overline{\xi}+\int_{t}^{T}f\left(
s,Y_{s^-}^n,Z_s^n\right) ds+n\int_{t}^{T}\left(\overline{Y}_{s^-}^{n}
-\overline{S_{s} }\right) ^{-}ds+\int_{t}^{T}\phi\left(s,
Y_{s^-}^n\right) dA_s-\sum_{i=1}^{m}\int_{t}^{T}(Z_{s}^{n})^{(i)}dH^{(i)}_{s}.
\end{equation}
and we define \quad
$\overline Y_t:=\displaystyle\sup_n \overline Y^n_t$.\\
From Theorem \ref{Thm:comp}, we have that a.s.,
$\overline{Y}^{n}_t\geq\widetilde{Y}_{t}^{n},\, 0\leq t\leq T, \,
n\in\N^{*},$ where $\left\{
(\widetilde{Y_{t}}^{n},\widetilde{Z}_{t}^{n}),\mbox{ }0\leq t\leq
T\right\} $ is the unique solution of the  GBDSDE driven by the L\'{e}vy processes
\begin{eqnarray*}
\widetilde{Y}_{t}^{n} &=&\overline{S}_{T}+\int_{t}^{T}f\left(
s,Y_{s^-}^n,Z_s^n\right)
ds+n\int_{t}^{T}(\overline{S}_{s}-\widetilde{Y}_{s^-}^{n})ds
+\int_{t}^{T}\phi\left(s, Y_{s^-}^n\right)
dA_s-\sum_{i=1}^{m}\int_{t}^{T}(\widetilde{Z} _{s}^{n})^{(i)}dH^{(i)}_{s}.
\end{eqnarray*}
Now let ${\bf G}=(\mathcal{G}_{t})_{0\leq t\leq T}$ be a filtration defined by $\mathcal{G}_{t}=\mathcal{F}^{L}_{t}\vee\mathcal{F}^{B}_{T}$ and $\nu $
a ${\bf G}$-stopping time such that $0\leq \nu \leq T$. Then,
applying Itô formula to $\widetilde{Y}^{n}_t e^{-n(t-\nu)}$, we have
\begin{eqnarray*}
\widetilde{Y}_{\nu }^{n} &=&\E\left\{ e^{-n(T-\nu)}\overline{S}_{T}
+\int^{ T}_{\nu }e^{-n(\nu-s)}f(s,Y_{s^-}^{n},Z_{s}^{n})ds+n\int^{ T}_{\nu }e^{-n(\nu-s)}\overline{S}_{s}ds \right.\nonumber \\
&&\left.+\int^{ T}_{\nu }e^{-n(\nu-s)}\phi(s,Y_{s^-}^{n})dA_s\mid
{\cal G} _{\nu }\right\}. \label{c'2}
\end{eqnarray*}
It is easily seen that
\begin{eqnarray*}
e^{-n(T-\nu)}\overline{S}_{T}+n\int_{\nu}^{T}e^{-n(s-\nu)}\overline{S}_{s}ds\rightarrow \overline{S}_{\nu
}{\bf 1}_{\{\nu <T\}}+\overline{S}_{T}{\bf 1}_{\{\nu =T\}}\,\, a.s.,\, \mbox{and in}\, L^{2}(\Omega)\, \mbox{as}\,\, n\rightarrow \infty,\label{S}
\end{eqnarray*}
and the conditional expectation converges also
in $L^{2}(\Omega)$. Moreover, we get
 \begin{eqnarray*}
\E\left(\int_{\nu}^{T}e^{-n(s-\nu)}f(s,Y_{s^-}^{n},Z_{s}^{n})ds\right)^{2}&\leq&
 \frac{C}{2n}\E\left( \int_{0 }^{T}
(f_s^{2}+|Y_{s}^{n}|^{2}+\|Z_{s}^{n}\|^{2}) ds\right),
\end{eqnarray*}
and \begin{eqnarray*}
\E\left(\int_{\nu}^{T}e^{-n(s-\nu)}\phi(s,Y_{s}^{n})dA_s\right)^{2}
&\leq& \E\left[|A_T|\left(\int_{0
}^{T}(\phi_s^{2}+K^{2}|Y_{s}^{n}|^{2}) dA_s\right)\right]<C,
\end{eqnarray*}
which provide
\begin{eqnarray*}
\E\left(\int_{\nu}^{T}e^{-n(\nu-s)}f(s,Y_{s^-}^{n},Z_{s}^{n})ds+\int_{\nu}^{T}e^{-n(s-\nu)}\phi(s,Y_{s}^{n})dA_s|\mathcal{G}_{\nu}\right)\longrightarrow
0 \label{ds}
\end{eqnarray*}
in $L^{2}(\Omega)$ as $n\rightarrow \infty$.

Consequently,
\begin{eqnarray*}
\widetilde{Y}_{\nu }^{n}\longrightarrow \overline{S}_{\nu
}{\bf 1}_{\{\nu <T\}}+\overline{S}_{T}{\bf 1}_{\{\nu=T\}}\;\; \mbox{in}\, L^{2}(\Omega), \; \mbox{as}\; n\rightarrow \infty.
\end{eqnarray*}

Therefore $Y_{\nu}\geq S_{\nu}$ a.s.
From this and the section theorem \cite{DM}, we deduce that $Y_{t}\geq S_{t}$ for all
$t\in[0,T]$ and then
\begin{eqnarray*}(Y_{t}^{n}-S_{t})^{-}\searrow 0,\;\; 0\leq t\leq T,\;\; a.s.
\end{eqnarray*}
Since $(Y_{t}^{n}-S_{t})^{-}\leq (S_{t}-Y_{t}^{0})^{+}\leq
\left| S_{t}\right| +\left| Y_{t}^{0}\right|$and the result follows from
the dominated convergence theorem.
\end{proof}

\noindent \textit{Step}\ 2. The general case. In light of the above
step, and for any $(\bar{Y},\bar{Z})\in \mathcal{S}^{2}(\R)\times \mathcal{M}^{2}(\R^m) $, the reflected GBDSDE driven by L\'{e}vy processes
\begin{equation*}
Y_{t}=\xi+\int^{T}_t f(s,{Y}_s,{Z}_s)ds
+\int_{t}^{T}\phi(s,Y_s)dA_s+\int_{t}^{T}g(s,\bar{Y}_s)\,dB_{s}
-\sum^{m}_{i=1}\int_{t}^{T}Z_{s}^{(i)}dH^{(i)}_{s}+K_T-K_t
\end{equation*}
has a unique solution $(Y,Z,K)$. So, we can define the mapping
$$
\begin{array}{lrlll}
\Psi:&\mathcal{S}^{2}(\R)\times \mathcal{M}^{2}(\R^m)&\longrightarrow&\mathcal{S}^{2}(\R)\times \mathcal{M}^{2}(\R^m)\\
&(\bar{Y},\bar{Z})&\longmapsto&(Y,Z)=\Psi(\bar{Y},\bar{Z}).
\end{array}
$$
Now, let $(Y,Z),\ (Y',Z')$ in $\mathcal{S}^{2}(\R)\times \mathcal{M}^{2}(\R^m)$ and $(\bar{Y},\bar{Z}),
(\bar{Y'},\bar{Z'})$ in $\mathcal{S}^{2}(\R)\times \mathcal{M}^{2}(\R^m)$ such that
$(Y,Z)=\Psi(\bar{Y},\bar{Z})$ and $(Y',Z')=\Psi(\bar{Y'},\bar{Z'})$.
Putting $\Delta \eta=\eta-\eta'$ for any process $\eta$, and by
virtue of It\^o's formula, we have
\begin{eqnarray*}
&&\E e^{-\mu t}|\Delta Y_t|^2+\E\int_{t}^T e^{-\mu s}\|\Delta Z_s\|^2ds\\
&&=2\E\int_{t}^T e^{-\mu s}\Delta
Y_s\left\{f(s,{Y}_{s^-},{Z}_{s})-f(s,{Y'}_{s^-},{Z'}_s)\right\}ds
+2\E\int_{t}^T e^{-\mu s} \Delta Y_{s}\left\{\phi(s,{Y}_{s^-})-\phi(s,{Y'}_{s^-})\right\}dA_s\\
 &&+2\E\int_{t}^T e^{-\mu s} \Delta Y_s d(\Delta K_s)+\int_{t}^Te^{-\mu
s}\left|g(s,\bar{Y}_{s^-})-g(s,\bar{Y'}_{s^-})\right|^2ds-\mu\E\int_{t}^T
e^{-\mu s} \left|\Delta Y_s\right|^2 ds.
\end{eqnarray*}
But since $\displaystyle\E\int_{t}^T e^{-\mu s} \Delta Y_s d(K_s-K'_s)\leq
0$, then from $({\bf H2})$ there exists constant $\gamma$ such that,
\begin{eqnarray*}
&&(\mu-\gamma)\E\int_{t}^T
e^{-\mu s} \left|\Delta Y_s\right|^2 ds+\frac{1}{2}\E\int_{t}^T e^{-\mu s}\|\Delta Z_s\|^2ds\\
&&\leq c\E\left(\int_{t}^T e^{-\mu s}\left|\Delta \bar {Y}_s\right|^2ds\right)
\end{eqnarray*}

Now choose $\mu=\gamma+2c$ and define $\bar{c}=2c$, we
obtain
\begin{eqnarray*}
&&\bar{c}\E\int_0^t
e^{-\mu s} \left|\Delta Y_s\right|^2 ds+\frac{1}{2}\E\int_0^t e^{-\mu s}\|\Delta Z_s\|^2ds\\
&&\leq \frac{1}{2}\left(\bar{c}\E\int_0^t e^{-\mu
s}\left|\Delta \bar {Y}_s\right|^2ds+\frac{1}{2}\E\int_0^t e^{-\mu
s}\left|\Delta \bar {Z}_s\right|^2ds\right).
\end{eqnarray*}
Consequently, $\Psi$ is a strict contraction on $\mathcal{S}^{2}(\R)\times \mathcal{M}^{2}(\R^m)$ equipped with the norm 
\begin{eqnarray*}
\|Y,Z)\|^{2}=\bar{c}\E\int_0^t
e^{-\mu s} \left|Y_s\right|^2 ds+\frac{1}{2}\E\int_0^t e^{-\mu s}\|Z_s\|^2ds
\end{eqnarray*}
and it has a unique fixed point, which is the unique solution our BDSDE.

{\bf Uniqueness} Assume $\left( Y_{t},Z_{t},K_{t}\right)_{0\leq t\leq T}$ and  $(Y_{t}^{\prime },Z_{t}^{\prime },K_{t}^{\prime })_{0\leq t\leq T}$ are two solutions of the reflected GBDSDE $(\xi,f,g,\phi,S)$ driven by L\'{e}vy processes. Set $\Delta Y_{t}=Y_{t}-Y_{t}^{\prime},\, \Delta Z_{t}=Z_{t}-Z_{t}^{\prime }$ and
$\Delta K_{t}=K_{t}-K_{t}^{\prime }$. Applying Itô's formula to $(\Delta Y)^2$ on the interval $[t,T]$ and taking
expectation on both sides, it follows that
\begin{eqnarray*}
&&\E\left| \Delta Y_{t}\right| ^{2}+\E\int_{t}^{T}\|\Delta Z_{s}\| ^{2}ds \\
&=& 2\E\int_{t}^{T}\Delta
Y_{s}(f(s,Y_{s^-},Z_{s}^{{}})-f(s,Y_{s^-}^{\prime },Z_{s}^{\prime
}))ds+2\E\int_{t}^{T}|g(s,Y_{s^-})-g(s,Y_{s^-}^{\prime})|^{2}ds\nonumber\\
&&+2\E\int_{t}^{T}\Delta Y_{s}(\phi(s,Y_{s^-})-\phi(s,Y_{s^-}^{\prime
}))dA_s+2\E\int_{t}^{T}\Delta Y_{s}d(\Delta K_s)\\
&\leq& 4c^{2}\E\int_{t}^{T}|\Delta Y_{s}|^{2}ds+\frac{1}{4c^{2}}\E\int_{t}^{T}|f(s,Y_{s^-},Z_{s}^{{}})-f(s,Y_{s^-}^{\prime },Z_{s}^{\prime})|^{2}ds\\
&&+\beta \E\int_{t}^{T}|\Delta Y_{s}|^{2}ds+c\E\int_{t}^{T}|\Delta Y_{s}|^{2}ds\\
&\leq&4c^{2}\E\int_{t}^{T}|\Delta Y_{s}|^{2}ds+\frac{2c^{2}}{4c^{2}}\E\int_{t}^{T}|\Delta Y_{s}|^2ds+\frac{2c^{2}}{4c^{2}}\E\int_{t}^{T}\|\Delta Z_{s}\|^2ds\\
&&+\beta \E\int_{t}^{T}|\Delta Y_{s}|^{2}ds+c\E\int_{t}^{T}|\Delta Y_{s}|^{2}ds\\
&\leq&(4c^{2}+c+\frac{1}{2})\E\int_{t}^{T}|\Delta Y_{s}|^{2}ds+\frac{1}{2}\E\int_{t}^{T}\|\Delta Z_{s}\|^2ds,
\end{eqnarray*}
here we have used the assumption $({\bf H2})$, the inequality $2ab\leq \frac{a^2}{\gamma}+\gamma b^2\;  (\forall\ \gamma>0)$ and the fact that
\begin{eqnarray*}
\int^{T}_{0}\Delta Y_{s} d(\Delta K_s) \leq 0. \label{D6}
\end{eqnarray*}
So, we have

\begin{eqnarray*}
\E|\Delta Y_{t}|^{2} &\leq &(4c^{2}+c+\frac{1}{2})\E\int_{t}^{T}|\Delta Y_{s}|^2ds.
\end{eqnarray*}
Henceforth, from Gronwall's inequality, it follows that $\E|\Delta Y_{t}|^{2}=\E|Y_{t}-Y^{\prime}_t|^{2}=0, 0\leq t\leq T$, that is, $Y_{t}=Y^{\prime}_t$ a.s.
Then, we also have $\E\int_{t}^{T}\|\Delta Z_{s}\|^2ds=\E\int_{t}^{T}\|Z_{s}-Z^{\prime}\|^2ds=0$ and $Z_t=Z^{\prime}_t,\; K_t=K^{\prime}_t$ follows.
The proof is complete now.
\end{proof}

\section {Connection to reflected stochastic PDIEs with nonlinear
Neumann boundary condition} \setcounter{theorem}{0}\setcounter{equation}{0}
 In this section, we study the link between reflected GBDSDEs driven by L\'{e}vy processes
and the solution of a class of reflected stochastic PDIEs with a nonlinear Neumann
boundary condition. Suppose that our L\'{e}vy processes $L$ has bounded
jump and has the following L\'{e}vy decomposition:
\begin{eqnarray*}
L_t=bt+\int_{|z|\leq 1}z(N_t(.,dz)-t\nu(dz))
\end{eqnarray*}
where $N_t(\omega, dz)$ denotes the random measure such that $\int_{\Lambda}N_t(.,dz)$ is a Poisson process
with parameter $\nu(\Lambda)$ for all set $\Lambda\;  (0\notin\Lambda)$.

Let $\Theta = (-\theta,\theta)$ and $e : [-\theta, \theta] \rightarrow \R$ such that $e(-\theta) = 1$ and $e(\theta) = -1$.
Consider the following reflected SDE:
\begin{eqnarray}
X_t=x+\int^{T}_t\sigma(X_{s^-})dL_s+\eta_t, \label{RSDEJ1}
\end{eqnarray}
and
\begin{eqnarray}
\eta_t=\int^{t}_0 e(X_{s})d|\eta|_s, \; \mbox{with}\; |\eta|_t=\int^{t}_0{\bf 1}_{\{X_s\in\partial\Theta\}}d|\eta|_s.\label{RSDEJ2}
\end{eqnarray}
Under adequate conditions (see \cite{Ot} or \cite{MR}), there exists a unique pair of
progressively measurable processes $(X,\eta)$ that satisfies $(\ref{RSDEJ1})$ and $(\ref{RSDEJ2})$, and for
any progressively measurable process $V$ which is right continuous having left-hand limits and take its values in $\bar{\Theta}$, we have
\begin{eqnarray*}
\int^{T}_0 (X_{s}-V_s)d|\eta|_s\geq 0.
\end{eqnarray*}
In order to attain our main result in this section, we give a Lemma appeared in \cite{NS2}.
\begin{lemma}
let $c:\Omega\times[0,T]\times\R\rightarrow\R$ be a measurable function such that
\begin{eqnarray*}
    |c(s,y)|\leq a_s(y^2\wedge|y|)\;\; a.s.,
\end{eqnarray*}
where $\{a_s, s\in [0, T]\}$ is a non-negative predictable process such that $E\int^T_0 a^2_sds <\infty$. Then, for each $0\leq t\leq T$, we have
\begin{eqnarray*}
\sum_{t\leq s\leq T}c(s,\Delta L_s)=\sum^{m}_{i=1}\int^{T}_{t}\langle c(s,.),p_i\rangle_{L^2(\nu)}dH^{(i)}_s+\int^{T}_{t}\int_{\R}c(s,y)d\nu(y)ds
\end{eqnarray*}
\end{lemma}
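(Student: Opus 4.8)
The plan is to recognise the left-hand side as the integral of $c$ against the jump measure of $L$, to subtract its compensator $\nu(dy)\,ds$, and then to expand the resulting compensated integral in the orthonormal basis of $L^2(\nu)$ provided by the polynomials $p_i$; that expansion is precisely what turns it into a sum of integrals against the Teugels martingales $H^{(i)}$.

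First I would settle integrability. Since $|c(s,y)|\le a_s\,(y^2\wedge|y|)$ and $(y^2\wedge|y|)^2\le 1\wedge y^2$, the hypothesis $\E\int_0^T a_s^2\,ds<\infty$ together with $\int_\R(1\wedge y^2)\,\nu(dy)<\infty$ gives $\E\int_0^T\!\!\int_\R c(s,y)^2\,\nu(dy)\,ds<\infty$, hence also $\E\int_0^T\!\!\int_\R|c(s,y)|\,\nu(dy)\,ds<\infty$. Writing $N(ds,dy)$ for the jump measure of $L$ and $\widetilde N(ds,dy)=N(ds,dy)-\nu(dy)\,ds$ for its compensated version, the process $P_t:=\sum_{0<s\le t}c(s,\Delta L_s)=\int_0^t\!\!\int_\R c(s,y)\,N(ds,dy)$ is then well defined and of integrable variation, and
\[
\widetilde P_t:=P_t-\int_0^t\!\!\int_\R c(s,y)\,\nu(dy)\,ds=\int_0^t\!\!\int_\R c(s,y)\,\widetilde N(ds,dy)
\]
is a square-integrable purely discontinuous martingale.

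Next I would record the link between the Teugels martingales and $\widetilde N$. From $Y^{(k)}_t=\sum_{0<s\le t}(\Delta L_s)^k-t\,\E[(\Delta L_1)^k]=\int_0^t\!\!\int_\R y^k\,\widetilde N(ds,dy)$ and $H^{(i)}=\sum_{k\le i}c_{i,k}Y^{(k)}$ one obtains $H^{(i)}_t=\int_0^t\!\!\int_\R p_i(y)\,\widetilde N(ds,dy)$; in particular $\Delta H^{(i)}_s=p_i(\Delta L_s)$. Because $L$ has no continuous part, $\mu(dx)=x^2\,\nu(dx)$, so $\langle p_i,p_j\rangle_{L^2(\nu)}=\int_\R x^2q_{i-1}(x)q_{j-1}(x)\,\nu(dx)=\langle q_{i-1},q_{j-1}\rangle_{L^2(\mu)}=\delta_{ij}$, and since $L$ has only $m$ jump sizes $L^2(\nu)$ has dimension $m$; thus $\{p_i\}_{i=1}^m$ is an orthonormal basis of $L^2(\nu)$. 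Consequently, for $d\P\otimes ds$-almost every $(\omega,s)$ one has $c(s,\cdot)=\sum_{i=1}^m\langle c(s,\cdot),p_i\rangle_{L^2(\nu)}\,p_i$ in $L^2(\nu)$, hence the two sides agree $d\P\,ds\,d\nu$-a.e.\ as functions of $(\omega,s,y)$. Substituting this into the $\widetilde N$-integral and using linearity of the integral together with the predictability and square-integrability of the coefficients $\langle c(s,\cdot),p_i\rangle_{L^2(\nu)}$ gives $\widetilde P_t=\sum_{i=1}^m\int_0^t\langle c(s,\cdot),p_i\rangle_{L^2(\nu)}\,dH^{(i)}_s$. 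Combining this with $\sum_{t\le s\le T}c(s,\Delta L_s)=P_T-P_t=\big(\widetilde P_T-\widetilde P_t\big)+\int_t^T\!\!\int_\R c(s,y)\,\nu(dy)\,ds$ (a fixed time carries no jump of the Lévy process $L$) yields the claimed identity.

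Most of this is routine: the integrability estimate, the compensator computation, and the passage from an $L^2(\nu)$-identity to an a.e.\ identity of integrands. The only real content is the pair of facts $\Delta H^{(i)}_s=p_i(\Delta L_s)$ and the completeness of $\{p_i\}_{i=1}^m$ in $L^2(\nu)$ — this is where the orthonormalisation defining the Teugels martingales in \cite{NS1} is used, and where it matters that $L$ has no Brownian part so that no extra basis vector is required. One can also finish without the explicit $\widetilde N$-representation by noting that both $\widetilde P$ and $Q_t:=\sum_{i=1}^m\int_0^t\langle c(s,\cdot),p_i\rangle_{L^2(\nu)}\,dH^{(i)}_s$ are purely discontinuous square-integrable martingales with identical jumps, $\Delta\widetilde P_s=c(s,\Delta L_s)=\sum_{i=1}^m\langle c(s,\cdot),p_i\rangle_{L^2(\nu)}\,p_i(\Delta L_s)=\Delta Q_s$, so that $\widetilde P-Q$ is a continuous purely discontinuous martingale vanishing at $0$, hence identically zero.
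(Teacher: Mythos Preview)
The paper does not actually prove this lemma; it quotes it from Nualart and Schoutens \cite{NS2} without argument. Your approach --- interpret the sum as an integral against the jump measure $N$, compensate to obtain a square-integrable purely discontinuous martingale, and then expand the integrand in the orthonormal basis $\{p_i\}_{i=1}^m$ of $L^2(\nu)$ to recognise the Teugels martingale integrals --- is exactly the argument given in \cite{NS2}, so in spirit you are reproducing the cited proof. The alternative ending via ``same jumps $\Rightarrow$ same purely discontinuous martingale'' is a nice touch and is equally valid.

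One genuine slip: the inequality $(y^2\wedge|y|)^2\le 1\wedge y^2$ is false for $|y|>1$ (the left side equals $y^2$, the right side equals $1$). What you really need is $(y^2\wedge|y|)^2\le y^2$, which \emph{is} true, together with $\int_{\R}y^2\,\nu(dy)<\infty$. The latter holds here because the L\'evy measure is supported on finitely many points (the standing assumption that $L$ has only $m$ jump sizes), and more generally it is part of the moment hypotheses under which the Teugels martingales are defined. With that correction the integrability step goes through as you intended, and the rest of the proof is fine.
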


Let $l:\R\rightarrow\R,\; h:[0,T]\times\R\rightarrow\R$ be continuous functions such that
\begin{description}
\item $(i)\; \E\left(|l(X_{T})|^{2}+\sup_{0\leq t\leq T}|h(t,X_t)|^{2}\right)<\infty,$
\item $(ii)\; l(x)\geq h(T,x), \; $ for all $x\in\R$.
\end{description}

Next, consider the following reflected GBDSDE:
\begin{eqnarray}
Y_{t}&=&l(X_T)+\int_{t}^{T}f(s,X_{s^-}Y_{s^-},Z_{s})ds+\int_{t
}^{T}\phi(s,X_{s^-},Y_{s^-})d|\eta|_s+\int_{t}^{T}g(s,X_{s^-},Y_{s^-})\,dB_{s}\nonumber\\
&&-\sum_{i=1}^{\infty}\int_{t}^{T}Z^{(i)}_{s}dH^{(i)}_{s}+K_{T}-K_t,\,\ 0\leq t\leq
T,\label{GBDSDEmarkovian}
\end{eqnarray}
such that the following holds $\P$-a.s
\begin{description}
\item  $(i)$ $Y_{t}\geq h(t,X_{t}),\,\,\,\ 0\leq t\leq T$,\,\
\item  $(ii)$\ $\displaystyle \int_{0}^{ T}\left( Y_{t^-}-h(t,X_{t})\right)
dK_{t}=0$.
\end{description}

Define
\begin{center}
$
u^{1}(t,x,y)=u(t,x+y)-u(t,x)-\frac{\partial u}{\partial x}(t,x)y,
$
\end{center}
where $u$ is the solution of the following reflected stochastic PDIE with a nonlinear
Neumann boundary condition:\\
\begin{eqnarray}
\left\{
\begin{array}{l}
\displaystyle \min\left\{u(t,x)-h(t,x),\; \frac{\partial u}{\partial t}(t,x)+a'\sigma(x)
\frac{\partial u}{\partial x}(t,x)+f(t,x,u(t,x),(u^{i}(t,x))_{i}^{m})\right.\\\\
\left.\,\,\,\,\,\,\,\,\,\,\,\,\,\,\,\,\,\ +\int_{\R}u^1(t,x,y)d\nu(y)+
g(t,x,u(t,x))dB_{t}\right\}=0,\,\,\
(t,x)\in[0,T]\times\Theta\\\\
\displaystyle e(x)\frac{\partial u}{\partial
x}(t,x)+\phi(t,x,u(t,x))=0,\,\,\ (t,x)\in[0,T]\times\{-\theta,\theta\},
\\\\ u(T,x)=l(x),\,\,\,\,\,\,\ x\in\Theta,
\end{array}\right.
\label{RSPDIE}
\end{eqnarray}
where $a'=a+\int_{\{|y|\geq 1\}}y\nu(dy),\; dB_t=\dot{B}_t$ denotes a white noise and
\begin{flushleft}
$u^{(1)}(t,x)=\int_{\R}u^1(t, x, y)p_1(y)\nu(dy) +\frac{\partial u}{\partial x}(t, x)(\int_{\R} y^2\nu(dy))^{1/2}$
\end{flushleft}
and for $2\leq i\leq m,\; u^{(i)}(t,x)=\int_{\R}u^1(t, x, y)p_i(y)\nu(dy)$.

Suppose that $u$ is $\mathcal{C}^{1,2}$ function such that $\frac{\partial u}{\partial t}$ and $\frac{\partial^2 u}{\partial x^2}$
is bounded by polynomial function of $x$, uniformly in $t$. Then we have the following
\begin{theorem}
The unique adapted solution of $(\ref{GBDSDEmarkovian})$ is given by
\begin{eqnarray*}
Y_t &=& u(t,X_t),\\
Z^{(1)}_t &=&\int_{\R}u^1(t,X_{t^-}, y)p_1(y)\nu(dy) + \frac{\partial u}{\partial x} \sigma(X_{t^-})\left(\int_{\R}y^2\nu(dy)\right)^{1/2}\\
Z^{(i)}_t &=&\int_{\R} u^{1}(t,X_{t^-},y)p_i(y)\nu(dy), \; 2\leq i\leq m,
\end{eqnarray*}
\end{theorem}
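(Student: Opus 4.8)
The plan is to follow the classical It\^{o}--Kunita (nonlinear Feynman--Kac) route: construct a candidate triple $\bigl(u(t,X_t),Z_t,K_t\bigr)$, with $Z$ as in the statement and $K$ to be identified, show that it solves the reflected GBDSDE $(\ref{GBDSDEmarkovian})$, and then invoke the uniqueness part of Theorem~\ref{Th:exis-uniq} to conclude that it must be \emph{the} solution. Throughout, $(X,\eta)$ denotes the reflected L\'{e}vy-driven process of $(\ref{RSDEJ1})$--$(\ref{RSDEJ2})$, for which one has $\Delta X_s=\sigma(X_{s^-})\Delta L_s$ and $d\eta_s=e(X_s)\,d|\eta|_s$ with $|\eta|$ carried by $\{X_s\in\partial\Theta\}$; standard $L^2$-moment estimates for $(X,|\eta|)$ (available from \cite{Ot,MR}) will be used freely.

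First I would apply the generalized (doubly stochastic) It\^{o}--Kunita formula to $u(s,X_s)$ on $[t,T]$; this is licit since $u\in\mathcal{C}^{1,2}$ with $\partial_s u$ and $\partial_{xx}^2 u$ of polynomial growth and $X$ has all moments. The output splits into: the drift $\int_t^T\bigl[\partial_s u+a'\sigma(X_{s^-})\partial_x u\bigr](s,X_{s^-})\,ds$ (the coefficient $a'$ appears precisely because of the compensation of the large jumps); the backward It\^{o} term $\int_t^T g(s,X_{s^-},u(s,X_{s^-}))\,dB_s$ carried by the $g\,dB_t$ white-noise term in $(\ref{RSPDIE})$, which is where the doubly stochastic structure is used; the pure-jump sum $\sum_{t<s\le T}\bigl[u(s,X_{s^-}+\sigma(X_{s^-})\Delta L_s)-u(s,X_{s^-})\bigr]$, which after subtracting its first-order Taylor part contributes a martingale plus the compensator $\int_t^T\int_\R c(s,y)\,d\nu(y)\,ds$; and the reflection term $\int_t^T\partial_x u(s,X_s)\,d\eta_s$. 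Here $c(s,y):=u(s,X_{s^-}+\sigma(X_{s^-})y)-u(s,X_{s^-})-\partial_x u(s,X_{s^-})\sigma(X_{s^-})y$, which is $u^1(s,X_{s^-},\sigma(X_{s^-})y)$ in the notation of the statement. The bound $|c(s,y)|\le a_s(y^2\wedge|y|)$, with $\E\int_0^T a_s^2\,ds<\infty$, follows from a Taylor estimate and the polynomial bound on $\partial_{xx}^2 u$ together with the moment bounds on $X$; the lemma of Nualart and Schoutens recalled above (from \cite{NS2}) then rewrites the jump sum as $\sum_{i=1}^m\int_t^T\langle c(s,\cdot),p_i\rangle_{L^2(\nu)}\,dH^{(i)}_s+\int_t^T\int_\R c(s,y)\,d\nu(y)\,ds$. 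Absorbing the normalizing factor $\partial_x u\,\sigma(X_{s^-})\bigl(\int_\R y^2\nu(dy)\bigr)^{1/2}$ into the $i=1$ coefficient yields exactly the stated $Z^{(i)}_t$, while the compensator term is matched by the $\int_\R u^1\,d\nu$ contribution in $(\ref{RSPDIE})$.

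It remains to match the boundary and obstacle terms and to exhibit $K$. On $\{X_s\in\partial\Theta\}$ the Neumann condition in $(\ref{RSPDIE})$ gives $e(X_s)\,\partial_x u(s,X_s)=-\phi(s,X_s,u(s,X_s))$, hence, since $d\eta_s=e(X_s)\,d|\eta|_s$ and $|\eta|$ is supported on $\partial\Theta$, the reflection term collapses to $-\int_t^T\phi(s,X_{s^-},u(s,X_{s^-}))\,d|\eta|_s$, exactly as in $(\ref{GBDSDEmarkovian})$. The remaining $ds$-terms are governed by the first line of $(\ref{RSPDIE})$: wherever $u(s,X_s)>h(s,X_s)$ the bracket $\partial_s u+a'\sigma\partial_x u+f+\int_\R u^1\,d\nu+g\,dB_s$ vanishes, so one defines $K_t$ as the nonnegative defect of this bracket, equivalently $K_t:=u(t,X_t)-l(X_T)$ minus all the It\^{o} terms computed above; the $\min$-structure then forces $K$ to be continuous, nondecreasing, $K_0=0$, and carried by $\{u(s,X_s)=h(s,X_s)\}$. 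Consequently $Y_t:=u(t,X_t)\ge h(t,X_t)$ is immediate from $u\ge h$ in $(\ref{RSPDIE})$, and the flat-off condition $\int_0^T(Y_{s^-}-h(s,X_s))\,dK_s=0$ holds since $dK$ charges only $\{u(s,X_s)=h(s,X_s)\}$ and, because $X$ a.s.\ has no jump at a time charged by $dK$, one has $Y_{s^-}=u(s,X_s)=h(s,X_s)$ there. Finally $Y\in\mathcal{S}^2(\R)$, $Z\in\mathcal{M}^2(\R^m)$ and $K\in\mathcal{A}^2(\R)$ follow from the polynomial bounds on $u,\partial_x u,\partial_{xx}^2 u,\partial_s u$ and the moment estimates on $(X,|\eta|)$, so $(Y,Z,K)$ solves the reflected GBDSDE $(\ref{GBDSDEmarkovian})$ and, by Theorem~\ref{Th:exis-uniq}, is its unique solution.

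I expect the principal obstacle to be the rigorous justification of the generalized It\^{o}--Kunita change of variables in this mixed setting --- composing the $\overleftarrow{B}$-measurable random field $u$ with the forward reflected, L\'{e}vy-driven, boundary-constrained process $X$ --- while keeping exact track of the cross terms between the backward $dB$ integral, the forward Teugels-martingale and jump terms, and the reflection measure $d\eta$; and, relatedly, the precise identification, measurability and monotonicity of the increasing process $K$ extracted from the $\min$-structure of $(\ref{RSPDIE})$, together with the verification of the Skorokhod flat-off condition and the compatibility of the continuity of $|\eta|$ with the jumps of $X$.
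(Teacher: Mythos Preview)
Your proposal is correct in spirit but follows a genuinely different route from the paper. The paper does \emph{not} apply It\^{o}--Kunita directly to the obstacle-problem solution $u$ and then extract $K$ from the $\min$-structure. Instead it works through the penalization scheme already set up in Section~4: for each $n$ it invokes the (non-reflected) Feynman--Kac result of Hu--Yong \cite{HY1} to identify the solution $({}^{n}Y,{}^{n}Z)$ of the penalized GBDSDE with $u_n(t,X_t)$ and the corresponding $Z$-formulas, where $u_n$ solves the \emph{penalized} SPDIE $(\ref{SPDIE})$ with driver $f_n=f+n(\cdot-h)^-$; it then applies the ordinary It\^{o} formula to $u_n(s,X_s)$ to verify this, and finally passes to the limit $n\to\infty$ using the convergence $({}^{n}Y,{}^{n}Z,{}^{n}K)\to(Y,Z,K)$ in $\mathcal{E}^{2,m}$ proved in the existence part of Theorem~\ref{Th:exis-uniq}.

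The trade-off is clear. The paper's route is shorter and cleaner because it recycles two pieces of heavy machinery already in hand: the non-reflected representation from \cite{HY1} and the $\mathcal{E}^{2,m}$-convergence of the penalized triples from Section~4; in particular, the increasing process $K$ is obtained as the limit of $K^n_t=n\int_0^t(u_n(s,X_s)-h(s,X_s))^-\,ds$, so its monotonicity, continuity and Skorokhod condition come for free from the limit argument, and there is no need to parse the $\min$-structure of $(\ref{RSPDIE})$ pathwise. Your direct approach is conceptually more transparent---one It\^{o} computation rather than a sequence plus a limit---but it shifts the burden to the two points you yourself flag: a rigorous doubly stochastic It\^{o}--Kunita formula for the composition of the $\overleftarrow{B}$-measurable random field $u$ with the reflected jump process $X$, and a clean identification of $K$ (measurability, monotonicity, flat-off condition) from the variational inequality. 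Both are doable, but neither is available off the shelf in this paper, which is presumably why the author preferred the penalization detour.
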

\begin{proof}
For each $n\geq 1$, let
$\{^{n}Y_{s},{}^{n}Z_{s},\,\ 0\leq s\leq T\}$ denote the
solution of the GBDSDE
\begin{eqnarray*}
^{n}Y_{s}&=&l(X_{T})+\int^{T}_{s}f(r,X_{r^-},{}^{n}Y_{r^-},
{}^{n}Z_{r})dr+n\int^{T}_{s}({}^{n}Y_{r^-}-h(r,X_{r}))^{-}dr\nonumber\\
&&+\int^{T}_{s}\phi(r,X_{r^-},^{n}Y_{r^-})d|\eta|_r
\int^{T}_{s}g(r,X_{r^-},^{n}Y_{r^-})dB_{r}-\sum^{m}_{i=1}\int^{T}_{s}
{}^{n}Z^{(i)}_{r}dH^{(i)}_{r}. \label{c4}
\end{eqnarray*}
It is know from Hu and Yong \cite{HY1} that
\begin{eqnarray*}
{}^{n}Y_t &=& u_n(t,X_t),\\
{}^{n}Z^{(1)}_t &=&\int_{\R}u_n^1(t,X_{t^-}, y)p_1(y)\nu(dy) + \frac{\partial u_n}{\partial x} \sigma(X_{t^-})\left(\int_{\R}y^2\nu(dy)\right)^{1/2}\\
{}^{n}Z^{(i)}_t &=&\int_{\R} u_n^{1}(t,X_{t^-},y)p_i(y)\nu(dy), \; 2\leq i\leq m,
\end{eqnarray*}
where $u_n$ is the classical solution of stochastic PDIE:
\begin{eqnarray}
\left\{
\begin{array}{l}
\displaystyle \frac{\partial u_n}{\partial t}(t,x)+a'\sigma(x)
\frac{\partial u_n}{\partial x}(t,x)+f_n(t,x,u_n(t,x),(u_n^{i}(t,x))_{i}^{m})\\\\
\,\,\,\,\,\,\,\,\,\,\,\,\,\,\,\,\,\ +\int_{\R}u_n^1(t,x,y)d\nu(y)+
g(t,x,u_n(t,x))dB_{t}=0,\,\,\
(t,x)\in[0,T]\times\Theta\\\\
\displaystyle e(x)\frac{\partial u_n}{\partial
x}(t,x)+\phi(t,x,u_n(t,x))=0,\,\,\ (t,x)\in[0,T]\times\{-\theta,\theta\},
\\\\ u_n(T,x)=l(x),\,\,\,\,\,\,\ x\in\Theta,
\end{array}\right.
\label{SPDIE}
\end{eqnarray}
where $f_{n}(t,x,y,z)=f(t,x,y,z)+n(y-h(t,x))^{-}$.

Applying It\^{o}'s formula to $u_n(s,X_s)$, we obtain
\begin{eqnarray}
u_n(T,X_T)-u_n(t,X_t)&=&\int_{t}^{T}\frac{\partial u_n}{\partial s}(s,X_{s^-})ds+\int_{t}^{T}e(X_s)\frac{\partial u_n}{\partial x}(s,X_{s})d|\eta|_s\nonumber\\
&&+\int_{t}^{T}\sigma(X_{s^-})\frac{\partial u_n}{\partial x}(s,X_{s^-})dL_s\nonumber\\
&&+\sum_{t\leq s\leq T}[u_n(s,X_s)-u_n(s,X_{s^-})
-\frac{\partial u_n}{\partial x}(s,X_{s^-})\Delta X_s].\label{Ito}
\end{eqnarray}
Lemma 4.1 applied to $u_n(s,X_{s^-}+y)-u_n(s,X_{s^-})-\frac{\partial u_n}{\partial x}(s,X_{s^-})y$ shows
\begin{eqnarray}
\sum_{t\leq s\leq T}[u_n(s,X_s)-u_n(s,X_{s^-})-\frac{\partial u_n}{\partial x}(s,X_{s^-})\Delta X_s]
&=&\sum_{i=1}^{m}\int_{t}^{T}\left(\int_{\R}u_n^{1}(s,X_{s^-},y)p_{i}(y)\nu(dy)\right)dH^{(i)}\nonumber\\
&&+\int_{t}^{T}\left(\int_{\R}u_n^{1}(s,X_{s^-},y)\nu(dy)\right)ds.\label{Applema}
\end{eqnarray}
Note that
\begin{eqnarray}
L_t=Y^{(1)}_t+t\E L_1=\left(\int_{\R}y^2\nu(dy)\right)^{1/2}H^{(1)}+t\E L_1, \label{Levyproperty}
\end{eqnarray}
where $\E L_1=a+\int_{\{|y|\geq 1\}}y\nu(dy)$.
Hence, substituting $(\ref{RSDEJ2})$, $(\ref{Applema})$ and $(\ref{Levyproperty})$ into $(\ref{Ito})$ together with $(\ref{SPDIE})$ yields
\begin{eqnarray*}
&&l(X_T)-u_n(t,X_t)\nonumber\\
&=&\int_{t}^{T}\left[\frac{\partial u_n}{\partial s}(s,X_{s^-})+(a+\int_{|y|\geq 1}y\nu(dy))\sigma(X_{s^-})\frac{\partial u_n}{\partial x}(s,X_{s^-})+\int_{\R}u_n^{1}(s,X_{s^-},y)\nu(dy)\right]ds\nonumber\\
&&+\int_{t}^{T}e(X_s)\frac{\partial u_n}{\partial x}(s,X_{s}){\bf 1}_{\{X_s\in\partial\Theta\}}d|\eta|_s\nonumber\\
&&+\int_{t}^{T}\left[\int_{\R}u_n^{1}(s,X_{s^-},y)p_{1}(y)\nu(dy)+\sigma(X_{s^-})\frac{\partial u_n}{\partial x}(s,X_{s^-})\left(\int_{\R}y^2\nu(dy)\right)^{1/2}\right]dH^{(1)}_s\nonumber\\
&&+\sum_{i=2}^{m}\int_{t}^{T}\left(\int_{\R}u_n^{1}(s,X_{s^-},y)p_{i}(y)\nu(dy)\right)dH^{(i)}_s.\nonumber\\
&=&-\int_{t}^{T}f(s,X_{s^-},u_n(s,X_s),(u_n(s,X_s))_{i=1}^{m})ds+n\int_{t}^{T}(u_n(s,X_s)-h(s,X_s))^{-}ds\\
&&-\int_{t}^{T}g(s,X_{s^-},u_n(s,X_s))dB_s-\int_{t}^{T}\varphi(s,X_{s^-},u_n(s,X_s))d|\eta|_s\\
&&+\int_{t}^{T}\left[\int_{\R}u_n^{1}(s,X_{s^-},y)p_{1}(y)\nu(dy)+\sigma(X_{s^-})\frac{\partial u_n}{\partial x}(s,X_{s^-})\left(\int_{\R}y^2\nu(dy)\right)^{1/2}\right]dH^{(1)}_s\nonumber\\
&&+\sum_{i=2}^{m}\int_{t}^{T}\left(\int_{\R}u_n^{1}(s,X_{s^-},y)p_{i}(y)\nu(dy)\right)dH^{(i)}_s.
\end{eqnarray*}
From which passing in the limit on $n$, and using the previous section we get the desired result of the Theorem.
\end{proof}

Next, we give a example of reflected stochastic PDIEs with a nonlinear Neumann
boundary condition.
\begin{example}
Suppose the L\'{e}vy process $L$ has the form of $L_t = at+\sum_{i=1}^{\infty}(N^{(i)}-\alpha_i t)$, where $(N^{(i)})^{\infty}_{i=0}$ is a sequence of independent Poisson
processes with parameters $(\alpha^{i})^{\infty}_{i=0}, (\alpha_i>0)$. Its L\'{e}vy measure is $\nu(dx) =\sum_{i=1}^{\infty}\alpha_i\delta_{\beta_i}(dx)$, where $\delta_{\beta_i}$ denotes the positive point mass measure at $\beta_i\in\R$ of size $1$. Furthermore, we assume that $\sum_{i=1}^{\infty}\alpha_i|\beta_i|^2<\infty$. Recall that this L\'{e}vy process has only one jumps size and no continuous parts so that $H^{(1)}_t=\sum_{i=1}^{\infty}\frac{\beta_i}{\sqrt{\alpha_i}}(N_t^{(i)}-\alpha_{i}t)$ and $H^{(i)}_t=0, i\geq 2$ (see \cite{NS2}). Let $(Y,Z,K)$ be the unique solution of the following reflected GBDSDEs
\begin{eqnarray*}
Y_{t}&=&l(X_T)+\int_{t}^{T}f(s,X_{s^-}Y_{s^-},Z_{s})ds+\int_{t
}^{T}\phi(s,X_{s^-},Y_{s^-})d|\eta|_s+\int_{t}^{T}g(s,X_{s^-},Y_{s^-})\,dB_{s}\nonumber\\
&&-\sum_{i=1}^{\infty}\int_{t}^{T}Z^{(i)}_{s}d(N_s^{(i)}-\alpha_{i}s)+K_{T}-K_t,\,\ 0\leq t\leq
T\label{GBDSDEmarkovian}
\end{eqnarray*}
such that the following holds $\P$-a.s
\begin{description}
\item  $(i)$ $Y_{t}\geq h(t,X_{t}),\,\,\,\ 0\leq t\leq T$,\,\
\item  $(ii)$\ $\displaystyle \int_{0}^{ T}\left( Y_{t^-}-h(t,X_{t})\right)
dK_{t}=0$.
\end{description}
Then
\begin{eqnarray*}
Y_t &=& u(t,X_t),\\
Z^{(1)}_t &=&\alpha_1u^1(t,X_{t^-}, \beta_1)p_1(\beta_1) + \sigma(X_{t^-})\frac{\partial u}{\partial x}(t,X_{t^-})\left(\sum_{i=1}^{\infty}\alpha_i|\beta_i|^{2}\right)^{1/2} \\
Z^{(i)}_t &=&\alpha_iu^{1}(t,X_{t^-},\beta_i)p_i(\beta), \; i\geq 2,
\end{eqnarray*}
where $u$ is the solution of the following reflected stochastic PDIEs with a nonlinear
Neumann boundary condition:
\begin{eqnarray*}
\left\{
\begin{array}{l}
\displaystyle \min\left\{u(t,x)-h(t,x),\; \frac{\partial u}{\partial t}(t,x)+a'\sigma(x)
\frac{\partial u}{\partial x}(t,x)+f(t,x,u(t,x),\frac{\partial u}{\partial x}(t,x))\right.\\\\
\left.\,\,\,\,\,\,\,\,\,\,\,\,\,\,\,\,\,\ +\sum_{i=1}^{\infty}\alpha_iu^1(t,x,\beta_i)+
g(t,x,u(t,x))dB_{t}\right\}=0,\,\,\
(t,x)\in[0,T]\times\Theta\\\\
\displaystyle e(x)\frac{\partial u}{\partial
x}(t,x)+\phi(t,x,u(t,x))=0,\,\,\ (t,x)\in[0,T]\times\{-\theta,\theta\},
\\\\ u(T,x)=l(x),\,\,\,\,\,\,\ x\in\Theta.
\end{array}\right.
\end{eqnarray*}
\end{example}

\label{lastpage-01}
\end{document}